\theoremstyle{plain}
\newtheorem{theorem}{Theorem}
\newtheorem{problem}[theorem]{Problem}
\newtheorem{lemma}[theorem]{Lemma}
\newtheorem{corollary}[theorem]{Corollary}
\theoremstyle{definition}
\theoremstyle{remark}
\newtheorem{remark}{Remark}
\numberwithin{equation}{section}
\renewcommand{\leq}{\leqslant}
\renewcommand{\geq}{\geqslant}
\renewcommand{\ge}{\geqslant}
\newcommand{\R}{\mathbb{R}}
\newcommand{\N}{\mathbb{N}}
\newcommand{\eps}{\varepsilon}
\newcommand{\1}{\mathbbm{1}}
\DeclareFontFamily{U}{mathx}{\hyphenchar\font45}
\DeclareFontShape{U}{mathx}{m}{n}{
<5> <6> <7> <8> <9> <10>
<10.95> <12> <14.4> <17.28> <20.74> <24.88>
mathx10
}{}
\DeclareSymbolFont{mathx}{U}{mathx}{m}{n}
\DeclareMathAccent{\widecheck}{0}{mathx}{"71}
\DeclareMathAccent{\wideparen}{0}{mathx}{"75}
\begin{document}

\title{Multi-parameter maximal Fourier restriction}

\author[A. Bulj]{Aleksandar Bulj}
\author[V. Kova\v{c}]{Vjekoslav Kova\v{c}}

\address{Department of Mathematics, Faculty of Science, University of Zagreb, Bijeni\v{c}ka cesta 30, 10000 Zagreb, Croatia}
\email{aleksandar.bulj@math.hr}
\email{vjekovac@math.hr}

\subjclass[2020]{Primary 42B10; %Fourier and Fourier-Stieltjes transforms and other transforms of Fourier type
Secondary 42B25, %Maximal functions, Littlewood-Paley theory
37L50} %Noncompact semigroups; dispersive equations; perturbations of infinite-dimensional dissipative dynamical systems

\keywords{Fourier transform, Fourier restriction operator, maximal estimate, multi-parameter estimate, convergence almost everywhere, Christ--Kiselev lemma}

\begin{abstract}
The main result of this note is the strengthening of a quite arbitrary a priori Fourier restriction estimate to a multi-parameter maximal estimate of the same type. 
This allows us to discuss a certain multi-parameter Lebesgue point property of Fourier transforms, which replaces Euclidean balls by ellipsoids. 
Along the lines of the same proof, we also establish a $d$-parameter Menshov--Paley--Zygmund-type theorem for the Fourier transform on $\mathbb{R}^d$.
Such a result is interesting for $d\geq2$ because, in a sharp contrast with the one-dimensional case, the corresponding endpoint $\textup{L}^2$ estimate (i.e., a Carleson-type theorem) is known to fail since the work of C. Fefferman in 1970.
Finally, we show that a Strichartz estimate for a given homogeneous constant-coefficient linear dispersive PDE can sometimes be strengthened to a certain pseudo-differential version. 
\end{abstract}

\maketitle

%%%%%%%%%%%%%%%%%%%%%%%%%%%%%%%%%%%%%%%%%%%%%%%%

\section{Introduction}
A classical sub-branch of harmonic analysis, started in the late 1960s, asks to restrict meaningfully the Fourier transform $\widehat{f}$ of a certain non-integrable function $f$ to certain curved lower-dimensional subsets of the Euclidean space; see Stein's book \cite[\S{}VIII.4]{Ste93}.
A general setting is obtained by taking a $\sigma$-finite measure $\sigma$ on Borel subsets of $\mathbb{R}^d$. Also, let $S\subseteq\mathbb{R}^d$ be a Borel set such that $\sigma(\mathbb{R}^d\setminus S)=0$. %Quite often $S$ is taken to be precisely the support of the measure $\sigma$.
Typically, $S$ is a closed manifold in $\mathbb{R}^d$ and $\sigma$ is an appropriately weighted surface measure on $S$.
As soon as we have an a priori estimate
\begin{equation}\label{eq:restr_apriori}
\big\|\widehat{f}\,\big|_S\big\|_{\textup{L}^q(S,\sigma)} \lesssim_{d,\sigma,p,q} \|f\|_{\textup{L}^p(\mathbb{R}^d)}
\end{equation}
for some $p\in(1,\infty)$ and $q\in[1,\infty]$, we can define the \emph{Fourier restriction operator} as the unique bounded linear operator
\begin{equation*}%\label{eq:restr_op1}
\mathcal{R}\colon\textup{L}^p(\mathbb{R}^d)\to\textup{L}^q(S,\sigma)
\end{equation*}
such that
$\mathcal{R}f=\widehat{f}|_S$ for every function $f$ in the Schwartz space $\mathcal{S}(\mathbb{R}^d)$.
Here and in what follows, we write $A\lesssim_P B$, when the estimate $A\leq C_P B$ holds for some finite (but unimportant) constant $C_P$ depending on a set of parameters $P$.

Let us agree to use the following normalization of the Fourier transform:
\[ (\mathcal{F}f)(\xi) = \widehat{f}(\xi) := \int_{\mathbb{R}^d} f(x) e^{-2\pi\mathbbm{i}x\cdot\xi} \,\textup{d}x \]
for an integrable function $f$ on $\mathbb{R}^d$ and for every $\xi\in\mathbb{R}^d$, so that the inverse Fourier transform is given by
\[ \widecheck{g}(x) := \int_{\mathbb{R}^d} g(\xi) e^{2\pi\mathbbm{i}x\cdot\xi} \,\textup{d}\xi \]
for $g\in\textup{L}^1(\mathbb{R}^d)$ and $x\in\mathbb{R}^d$.
We always have the trivial estimate
\begin{equation}\label{eq:restr_apriori2}
\big\|\widehat{f}\,\big|_S\big\|_{\textup{L}^\infty(S,\sigma)} \leq \|f\|_{\textup{L}^1(\mathbb{R}^d)}
\end{equation}
for every $f\in\textup{L}^1(\mathbb{R}^d)$, so restriction of the Fourier transform $f\mapsto\widehat{f}|_S$ also gives a bounded linear operator
\begin{equation*}%\label{eq:restr_op2}
\mathcal{R}\colon\textup{L}^1(\mathbb{R}^d)\to\textup{L}^\infty(S,\sigma).
\end{equation*}
Using the Riesz--Thorin theorem to interpolate between \eqref{eq:restr_apriori} and \eqref{eq:restr_apriori2} then gives us a family of bounded linear operators
\begin{equation*}%\label{eq:restr_op3}
\mathcal{R}\colon\textup{L}^s(\mathbb{R}^d)\to\textup{L}^{qs'/p'}(S,\sigma)
\end{equation*}
for every $1\leq s\leq p$, where $p'$ denotes the conjugated exponent of $p$, i.e., $1/p+1/p'=1$.
All these operators are mutually compatible on their intersections, so they are rightfully denoted by the same letter $\mathcal{R}$.

A novel route was taken recently by M\"{u}ller,  Ricci, and Wright \cite{MRW19}, who initiated the program of justifying pointwise Fourier restriction, 
\[ \lim_{t\to0+} \widehat{f}\ast\chi_{t} = \mathcal{R}f \quad \text{$\sigma$-a.e.\@ on $S$} \]
for $f\in\textup{L}^p(\mathbb{R}^d)$,
via maximal estimates
\begin{equation}\label{eq:restr_maximal}
\Big\|\sup_{t\in(0,\infty)}\big|\widehat{f}\ast\chi_{t}\big|\Big\|_{\textup{L}^q(S,\sigma)} \lesssim_{d,\sigma,\chi,p,q} \|f\|_{\textup{L}^p(\mathbb{R}^d)}.
\end{equation}
Here, $\chi\in\mathcal{S}(\mathbb{R}^d)$ is a Schwartz function with integral $1$ and we write $\chi_t(x):=t^{-d}\chi(t^{-1}x)$ for a given parameter $t\in(0,\infty)$.
Note that the operator on left hand side of \eqref{eq:restr_maximal} cannot be understood as a composition of the Fourier transform with some maximal function of the Hardy--Littlewood type, since the measure $\sigma$ can be (and typically is) singular with respect to the Lebesgue measure.

The authors of \cite{MRW19} achieved the aforementioned goal in two dimensions by imitating the proofs of (somewhat definite) two-dimensional restriction theorems of Carleson and Sj\"{o}lin \cite{CS72} and Sj\"{o}lin \cite{S74}. This methodology was later followed by Ramos \cite{Ram20,Ram22}, Jesurum \cite{Jes22}, and Fraccaroli \cite{Fra21} to obtain a few higher-dimensional or less smooth/regular results.
The second approach to the maximal Fourier restriction was suggested by Vitturi \cite{Vit17}, soon after the appearance of \cite{MRW19}. He deduced a nontrivial result for higher-dimensional compact hypersurfaces from ordinary restriction estimates \eqref{eq:restr_apriori} by inserting the iterated Hardy--Littlewood maximal function in a clever non-obvious way. The idea of using \eqref{eq:restr_apriori} as a black box was later also employed by Oliveira e Silva and one of the present authors \cite{KOeS21}, while the subsequent paper \cite{Kov19} built on this idea to show that the a priori estimate \eqref{eq:restr_apriori} implies the maximal estimate \eqref{eq:restr_maximal} in a general and abstract way, as soon as $p<q$.
Each of these two approaches has its advantages and its limitations. The present paper builds further upon the second approach and it has been partially motivated by a post on Vitturi's blog \cite{Vit19}.
In fact, Theorem~\ref{thm:meainrestr} below answers one of the open questions that appeared in \cite[\S4]{Vit19}. 

\smallskip
For a given function $\chi\colon\mathbb{R}^d\to\mathbb{C}$ and arbitrary parameters $r_1,\ldots,r_d\in(0,\infty)$ we define the \emph{multi-parameter dilate} of $\chi$ as
\[ \chi_{r_1,\ldots,r_d}\colon\mathbb{R}^d\to\mathbb{C}, \quad \chi_{r_1,\ldots,r_d}(x_1,\ldots,x_d) := \frac{1}{r_1\cdots r_d} \chi\Big(\frac{x_1}{r_1},\ldots,\frac{x_d}{r_d}\Big). \]
Also let
\[ B_{r_1,\ldots,r_d}(y_1,\ldots,y_d) := \bigg\{ (x_1,\ldots,x_d)\in\mathbb{R}^d : \frac{(x_1-y_1)^2}{r_1^2} + \cdots + \frac{(x_d-y_d)^2}{r_d^2} \leq 1 \bigg\} \]
be the \emph{ellipsoid} centered at $(y_1,\ldots,y_d)\in\mathbb{R}^d$ with semi-axes of lengths $r_1,\ldots,r_d$ in directions of the coordinate axes.
Its volume will be written simply as $|B_{r_1,\ldots,r_d}|$.
The particular case $B_r(y) := B_{r,\ldots,r}(y)$ for $r\in(0,\infty)$ is simply the Euclidean ball.

\begin{theorem}\label{thm:meainrestr}
Suppose that the measure space $(S,\sigma)$ and the exponents $1<p<q<\infty$ are such that the a priori restriction estimate \eqref{eq:restr_apriori} holds for every Schwartz function $f$.
\begin{itemize}
\item[(a)] Then for every $\chi\in\mathcal{S}(\mathbb{R}^d)$ and every $f\in\textup{L}^p(\mathbb{R}^d)$ one also has the multi-parameter maximal estimate
\begin{equation}\label{eq:restr_multimax}
\Big\|\sup_{r_1,\ldots,r_d\in(0,\infty)}\big|\widehat{f}\ast\chi_{r_1,\ldots,r_d}\big|\Big\|_{\textup{L}^q(S,\sigma)} \lesssim_{d,\sigma,\chi,p,q} \|f\|_{\textup{L}^p(\mathbb{R}^d)}.
\end{equation}
\item[(b)] For every $\chi\in\mathcal{S}(\mathbb{R}^d)$ such that $\int_{\mathbb{R}^d}\chi=1$ and every $f\in\textup{L}^s(\mathbb{R}^d)$, $1\leq s\leq p$, one also has the multi-parameter convergence result
\begin{equation}\label{eq:restr_convergence}
\lim_{(0,\infty)^d\ni(r_1,\ldots,r_d)\to(0,\ldots,0)} \widehat{f}\ast\chi_{r_1,\ldots,r_d} = \mathcal{R}f \quad \text{$\sigma$-a.e.\@ on $S$}.
\end{equation}
\item[(c)] Moreover, if $f\in\textup{L}^{s}(\mathbb{R}^d)$, $1\leq s\leq 2p/(p+1)$, then we also have the ``multi-parameter Lebesgue point property''
\begin{equation}\label{eq:restr_Lebesgue}
\lim_{(0,\infty)^d\ni(r_1,\ldots,r_d)\to(0,\ldots,0)} \frac{1}{|B_{r_1,\ldots,r_d}|} \int_{B_{r_1,\ldots,r_d}(\xi)} \big| \widehat{f}(\eta) - (\mathcal{R}f)(\xi) \big| \,\textup{d}\eta = 0
\end{equation}
of $\sigma$-almost every point $\xi\in S$. In particular,
\begin{equation}\label{eq:restr_ellipsoid}
\lim_{(0,\infty)^d\ni(r_1,\ldots,r_d)\to(0,\ldots,0)} \frac{1}{|B_{r_1,\ldots,r_d}|} \int_{B_{r_1,\ldots,r_d}(\xi)} \widehat{f}(\eta) \,\textup{d}\eta = (\mathcal{R}f)(\xi)
\end{equation}
for $\sigma$-a.e.\@ $\xi\in S$.
\end{itemize}
\end{theorem}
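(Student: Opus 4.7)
The plan is to adapt the single-parameter abstract argument from \cite{Kov19} to the multi-parameter setting for part (a), and then to derive (b) and (c) from (a) by standard density arguments combined with an autocorrelation trick for (c).

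For part (a), the starting point is the Fourier-side identity
\[\widehat{f}\ast\chi_{r_1,\ldots,r_d}\big|_S = \mathcal{R}\bigl(f\cdot M_{r_1,\ldots,r_d}\bigr),\qquad M_{r_1,\ldots,r_d}(y) := \widecheck{\chi}(r_1 y_1,\ldots,r_d y_d).\]
For fixed $(r_1,\ldots,r_d)$ the a priori estimate \eqref{eq:restr_apriori} immediately yields a uniform $\textup{L}^p\to\textup{L}^q(S,\sigma)$ bound. To handle the supremum I linearize in the manner of Kolmogorov--Seliverstov--Plessner: it suffices to bound, uniformly in $g\in\textup{L}^{q'}(S,\sigma)$, a unimodular function $\varepsilon$, and measurable selections $r_j\colon S\to(0,\infty)$, the pairing
\[\int_S g(\xi)\,\varepsilon(\xi)\,\widehat{f}\ast\chi_{r_1(\xi),\ldots,r_d(\xi)}(\xi)\,\textup{d}\sigma(\xi)\ \lesssim\ \|f\|_{\textup{L}^p(\mathbb{R}^d)}\,\|g\|_{\textup{L}^{q'}(S,\sigma)}.\]
Exploiting the Schwartz decay of $\widecheck{\chi}$ I decompose it as $\widecheck{\chi}=\sum_{k\in\mathbb{Z}_{\geq 0}^d}\Psi_k$ with $\Psi_k$ supported in a dyadic rectangle of side lengths $\sim 2^{k_j}$ and $\|\Psi_k\|_\infty\lesssim_N\prod_j 2^{-Nk_j}$, and I dyadically partition $S$ according to the level sets $r_j(\xi)\sim 2^{l_j}$. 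The pairing thereby splits into a $\mathbb{Z}^d$-indexed family of pieces, each controllable via the adjoint restriction $\mathcal{R}^*\colon\textup{L}^{q'}(S,\sigma)\to\textup{L}^{p'}(\mathbb{R}^d)$. Absolute convergence of the sum is then ensured by the Schwartz decay combined with an $\ell^{p'}\hookrightarrow\ell^{q'}$ gain coming from the strict inequality $p<q$.

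Part (b) is a standard density argument. For $f\in\mathcal{S}(\mathbb{R}^d)$, $\widehat{f}$ is continuous and $\widehat{f}\ast\chi_{r_1,\ldots,r_d}\to\widehat{f}=\mathcal{R}f$ uniformly as all $r_j\to 0^+$. For general $f\in\textup{L}^s$, $1\leq s\leq p$, decompose $f=g+h$ with $g$ Schwartz and $\|h\|_{\textup{L}^s}$ arbitrarily small. Riesz--Thorin interpolation of the maximal estimate from (a) with \eqref{eq:restr_apriori2} yields a maximal estimate with source $\textup{L}^s$ and target $\textup{L}^{qs'/p'}(S,\sigma)$, while the interpolated a priori restriction controls $\|\mathcal{R}h\|_{\textup{L}^{qs'/p'}(S,\sigma)}$; letting $\|h\|_{\textup{L}^s}\to 0$ then forces $\sigma$-a.e.\ convergence.

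For part (c) the range $1\leq s\leq 2p/(p+1)$ is dictated by an autocorrelation trick. With $f^*(x):=\overline{f(-x)}$ and $F:=f\ast f^*$, Young's convolution inequality places $F$ in $\textup{L}^{s/(2-s)}(\mathbb{R}^d)$; the condition $s\leq 2p/(p+1)$ is exactly $s/(2-s)\leq p$, making part (a) applicable to $F$. The Fourier identity $|\widehat{f}|^2=\widehat{F}$ combined with the pointwise Cauchy--Schwarz bound (valid for any $\chi\geq 0$)
\[\bigl(|\widehat{f}|\ast\chi_{r_1,\ldots,r_d}(\xi)\bigr)^2 \leq \|\chi\|_{\textup{L}^1}\cdot\widehat{F}\ast\chi_{r_1,\ldots,r_d}(\xi)\]
upgrades the part (a) estimate for $\widehat{F}\ast\chi_{r_1,\ldots,r_d}$ to one for $|\widehat{f}|\ast\chi_{r_1,\ldots,r_d}$ in $\textup{L}^{qs'/p'}(S,\sigma)$. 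Choosing $\chi\geq 0$ to be a Schwartz majorant of $|B_1|^{-1}\1_{B_1}$ then dominates the ellipsoidal averages pointwise, producing a maximal estimate for $|B_{r_1,\ldots,r_d}|^{-1}\int_{B_{r_1,\ldots,r_d}(\xi)}|\widehat{f}(\eta)-\mathcal{R}f(\xi)|\,\textup{d}\eta$, whence \eqref{eq:restr_Lebesgue} (and \eqref{eq:restr_ellipsoid}) follows by the usual density argument, the limit being trivially zero for Schwartz $f$.

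The principal obstacle is the multi-parameter dyadic summation in part (a): passing from the $\mathbb{Z}$-indexed sum of \cite{Kov19} to a $\mathbb{Z}^d$-indexed sum requires careful exploitation of the coordinate-wise Schwartz decay to keep the $\ell^{p'}\hookrightarrow\ell^{q'}$ gain from $p<q$ adequate in the $d$-parameter setting.
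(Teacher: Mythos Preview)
Your outlines for parts (b) and (c) are essentially what the paper does (the paper handles (b) via the splitting $\textup{L}^s\subseteq\textup{L}^1+\textup{L}^p$ rather than interpolating the sublinear maximal operator, and works (c) at the endpoint $s=2p/(p+1)$ so that the autocorrelation lands exactly in $\textup{L}^p$; these are minor variants). The substantive divergence is in part~(a).

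The paper does \emph{not} adapt the linearization/dyadic scheme you describe. Instead it introduces a new technical ingredient, a \emph{multi-parameter Christ--Kiselev lemma}: if $T\colon\textup{L}^p\to\textup{L}^q$ is bounded linear with $p<q$, then for any $d$ increasing families of sets $(E_j(i))_{i\in I_j}$ the maximal operator $T_\star f=\sup_{i_1,\ldots,i_d}|T(f\mathbbm{1}_{E_1(i_1)\cap\cdots\cap E_d(i_d)})|$ is bounded with norm at most $(1-2^{1/q-1/p})^{-d}\|T\|$. For part~(a) the paper then writes, via the fundamental theorem of calculus in each coordinate,
\[
\widecheck{\chi}(r_1y_1,\ldots,r_dy_d)=\sum_{\epsilon\in\{-1,1\}^d}(-1)^{\#\epsilon}\int_{Q(\epsilon)}\mathbbm{1}_{R(\epsilon;|t_1|/r_1,\ldots,|t_d|/r_d)}(y)\,(\partial_1\cdots\partial_d\widecheck{\chi})(t)\,\textup{d}t,
\]
so that $\mathcal{M}f$ is dominated by an integral in $t$ of expressions $\sup_{s_1,\ldots,s_d}|\mathcal{F}(f\mathbbm{1}_{\text{axis-parallel box}})|$, to each of which the multi-parameter Christ--Kiselev lemma applies directly with $T$ the restricted Fourier transform $\textup{L}^p(\mathbb{R}^d)\to\textup{L}^q(S,\sigma)$. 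No decomposition of $S$ and no dyadic summation is needed.

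Your route, by contrast, leaves precisely the decisive step open: you yourself flag the $\mathbb{Z}^d$-indexed summation as ``the principal obstacle'' and do not carry it out. Concretely, once you partition $S=\bigcup_l S_l$ by $r_j(\xi)\sim 2^{l_j}$, the norms $\|g\mathbbm{1}_{S_l}\|_{\textup{L}^{q'}}$ combine only in $\ell^{q'}$, whereas bounding the pairing piecewise via $\mathcal{R}^*$ forces an $\ell^1$ sum over $l$; the Schwartz decay handles the $k$-sum but not this one, since the $y$-side rectangles associated to different $l$ are nested rather than disjoint, so no compensating $\ell^p$ structure in $\|f\mathbbm{1}_{R_l}\|_{\textup{L}^p}$ appears. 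The single $\ell^{p'}\hookrightarrow\ell^{q'}$ embedding coming from $p<q$ is one-dimensional and does not by itself close a $d$-parameter sum over $l\in\mathbb{Z}^d$. It may be possible to repair this, but as written it is a genuine gap; the paper's Christ--Kiselev approach sidesteps the difficulty entirely.
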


Since \eqref{eq:restr_multimax} is a stronger maximal inequality than \eqref{eq:restr_maximal}, Theorem~\ref{thm:meainrestr} can be viewed as a multi-parameter generalization of \cite[Theorem~1]{Kov19} suggested by Vitturi \cite[\S4]{Vit19}.
For instance, by \eqref{eq:restr_convergence} now we are able to justify the existence of limits in various anisotropic scalings, such as
\[ \lim_{t\to0+} \widehat{f}\ast\chi_{t,t^2,\ldots,t^d}. \]
However, the required assumptions on $\chi$ are more restrictive here.
The proof of Theorem~\ref{thm:meainrestr} will only use that $\chi$ is a function satisfying
\begin{equation}\label{eq:chi_cond}
\big|\big(\partial_1\cdots\partial_d\widehat{\chi}\big)(x)\big| \lesssim_{d,\delta} (1+|x|)^{-d-\delta}
\end{equation}
for some $\delta>0$ and every $x\in\mathbb{R}^d$.
The last condition is different from
\[ \big|\big(\nabla\widehat{\chi}\big)(x)\big| \lesssim_{d,\delta} (1+|x|)^{-1-\delta}, \]
used in \cite{Kov19}, and \eqref{eq:chi_cond} is not satisfied when $\chi$ is the normalized indicator function of the standard unit ball in $d\geq2$ dimensions.
This is the reason why we conclude the convergence of the Fourier averages over shrinking ellipsoids \eqref{eq:restr_ellipsoid} only in the smaller range $1\leq s\leq 2p/(p+1)$ and not in the full range $1\leq s\leq p$, as it was the case with averages over balls; see \cite{Kov19}.
This leads us to an interesting new open problem.

\begin{problem}
Prove or disprove that the assumptions of Theorem~\ref{thm:meainrestr} imply \eqref{eq:restr_ellipsoid} for every $f\in\textup{L}^p(\mathbb{R}^d)$ and for $\sigma$-a.e.\@ $\xi\in S$.
\end{problem}

The following question related to property \eqref{eq:restr_Lebesgue} remained open after \cite{Kov19} and its particular cases have already been studied by Ramos \cite{Ram20,Ram22} and Fraccaroli \cite{Fra21}.

\begin{problem}
Prove or disprove that, for every $f\in\textup{L}^p(\mathbb{R}^d)$, the assumptions of Theorem~\ref{thm:meainrestr} imply that $\sigma$-almost every point $\xi\in S$ is the Lebesgue point of $\widehat{f}$, in the sense that
\[ \lim_{t\to0+} \frac{1}{|B_{t}|} \int_{B_{t}(\xi)} \big| \widehat{f}(\eta) - (\mathcal{R}f)(\xi) \big| \,\textup{d}\eta = 0. \]
\end{problem}

In other words, here we do not know how to extend the range $1\leq s\leq 2p/(p+1)$, even when we only consider balls instead of arbitrary ellipsoids. 

The general maximal principle from \cite{Kov19}, concluding something about the Lebesgue sets of Fourier transforms $\widehat{f}$ from restriction estimates \eqref{eq:restr_apriori}, has been used by Bilz \cite{Bil20}.
It would be interesting to find similar applications of the stronger property \eqref{eq:restr_Lebesgue}.

\smallskip
The main new ingredient in the proof of Theorem~\ref{thm:meainrestr} will be a multi-parameter variant of the Christ--Kiselev lemma \cite{CK01}.
Even if its generalization is somewhat straightforward, we will argue that it is substantial by using it to deduce the following result on the Fourier transform alone, with no restriction phenomena involved.

\begin{theorem}
\label{thm:Carleson}
\begin{itemize}
\item[(a)]
For $p\in[1,2)$ and $f\in\textup{L}^p(\mathbb{R}^d)$ we have the maximal estimate
\begin{equation*}
%\label{eq:Fourier_max_estimate}
    \Big\|\sup_{R_1,\ldots,R_d\in(0,\infty)}\big|\mathcal{F}\big(f\mathbbm{1}_{[-R_1,R_1]\times\cdots\times[-R_d,R_d]}\big) \big| \Big\|_{\textup{L}^{p'}(\mathbb{R}^d)} \lesssim_{d,p} \|f\|_{\textup{L}^p(\mathbb{R}^d)}
\end{equation*}
and $d$-parameter convergence
\begin{equation}\label{eq:Fourier_conv}
\lim_{R_1\to\infty,\ldots,R_d\to\infty} \int_{[-R_1,R_1]\times\cdots\times[-R_d,R_d]} f(x) e^{-2\pi\mathbbm{i}x\cdot\xi} \,\textup{d}x = \widehat{f}(\xi) 
\end{equation}
holds for a.e.\@ $\xi\in\mathbb{R}^d$.
\item[(b)]
If $d\ge 2$, then there exist a function $f\in \textup{L}^2(\R^d)$ and a set of positive measure $Q\subseteq \R^d$  such that
\begin{equation}
    \label{eq:Fourier_diver}
    \limsup_{R_1\to\infty,\ldots,R_d\to\infty} \bigg|\int_{[-R_1,R_1]\times\cdots\times[-R_d,R_d]} f(x) e^{-2\pi\mathbbm{i}x\cdot\xi} \,\textup{d}x\bigg|=\infty \quad \text{for every } x\in Q.
\end{equation}
In particular, even the weak $\textup{L}^2$ estimate
\begin{equation*}
    \Big\|\sup_{R_1,\ldots,R_d\in(0,\infty)}\big|\mathcal{F}\big(f\mathbbm{1}_{[-R_1,R_1]\times\cdots\times[-R_d,R_d]}\big) \big| \Big\|_{\textup{L}^{2,\infty}(\mathbb{R}^d)} \lesssim_{d} \|f\|_{\textup{L}^2(\mathbb{R}^d)}
\end{equation*}
\underline{does not hold}.
% If $d\geq2$, then the estimate 
% \begin{equation}
%     \label{eq:Fourier_L2_fail}
%     \Big\|\sup_{R_1,\ldots,R_d\in(0,\infty)}\big|\mathcal{F}\big(f\mathbbm{1}_{[-R_1,R_1]\times\cdots\times[-R_d,R_d]}\big) \big| \Big\|_{\textup{L}^{2,\infty}(\mathbb{R}^d)} \lesssim_{d} \|f\|_{\textup{L}^2(\mathbb{R}^d)}
% \end{equation}
% \underline{does not hold} in general and \eqref{eq:Fourier_conv} \underline{fails} on a set of positive measure for some $f\in\textup{L}^2(\mathbb{R}^d)$. 
\end{itemize}
\end{theorem}

Part (a) can be thought of as a \emph{multi-parameter Menshov--Paley--Zygmund theorem}, while part (b) gives a counterexample to the corresponding multi-parameter analogue of Carleson's theorem \cite{Car66}.
The latter is not our original result, but a mere adaptation of the argument by Charles Fefferman \cite{Fef71} to the continuous setting. We include its detailed proof too for completeness of the exposition.

\smallskip
Finally, connections between the Fourier restriction problem and PDEs have been known since the work of Strichartz \cite{Str77}.
Let us comment on a certain reformulation of \eqref{eq:restr_multimax} in that direction.
The following standard setting is taken from the textbook by Tao \cite{Taobook}; also see the lecture notes by Koch, Tataru, and Vi\c{s}an \cite{KTVbook}.
Let $\phi\colon\mathbb{R}^n\to\mathbb{R}$ be a $\textup{C}^\infty$ function. 
A self-adjoint operator $\phi(D)=\phi(\nabla/2\pi\mathbbm{i})$ is defined to be the Fourier multiplier associated with the symbol $\phi$, i.e., 
\[ (\widehat{\phi(D) f})(\xi) = \phi(\xi) \widehat{f}(\xi). \]
If $\phi$ happens to be a polynomial
\[ \phi(\xi) = \sum_{|\alpha|\leq k} c_\alpha \xi^\alpha \]
in $n$ variables $\xi=(\xi_1,\ldots,\xi_n)$ of degree $k$ with real coefficients $c_\alpha$, then $\phi(D)$ is just the self-adjoint differential operator acting on Schwartz functions,
\[ \phi(D) = \sum_{|\alpha|\leq k} (2\pi\mathbbm{i})^{-|\alpha|} c_\alpha \partial^\alpha . \]
The solution of a general scalar constant-coefficient linear dispersive initial value problem
\begin{equation}\label{eq:dispPDE}
\begin{cases}
\partial_t u(x,t) = \mathbbm{i} \phi(D) u(x,t) & \text{in } \mathbb{R}^n\times\mathbb{R}, \\
\ \ u(x,0) = f(x) & \text{in } \mathbb{R}^n
\end{cases}
\end{equation}
is given explicitly as
\[  u(x,t) = (e^{\mathbbm{i} t \phi(D)} f)(x) := \int_{\mathbb{R}^n} e^{\mathbbm{i} t \phi(\xi) + 2\pi\mathbbm{i} x\cdot\xi} \widehat{f}(\xi) \,\textup{d}\xi \]
for $x\in\mathbb{R}^n$, $t\in\mathbb{R}$, and a Schwartz function $f\in\mathcal{S}(\mathbb{R}^d)$; see \cite[Section~2.1]{Taobook}.
%For exponents $q,r\in[1,\infty]$ let
%\[ \| u \|_{\textup{L}^q(\textup{L}^r)} := \big\| \|u(t,x)\|_{\textup{L}_x^r(\mathbb{R}^d)} \big\|_{\textup{L}_t^q(\mathbb{R})} \]
%denote the mixed Lebesgue norm of a measurable function $u\colon\mathbb{R}\times\mathbb{R}^d\to\mathbb{C}$. More details on the mixed Lebesgue spaces can be found in \cite{BP61}.

\begin{corollary}
\label{cor:Strichartz}
Suppose that a Strichartz-type estimate for \eqref{eq:dispPDE} of the form
\begin{equation}\label{eq:Strichartz}
\big\| (e^{\mathbbm{i} t \phi(D)} f)(x) \big\|_{\textup{L}^s_{(x,t)}(\mathbb{R}^n\times\mathbb{R})} \lesssim_{n,\phi} \|f\|_{\textup{L}^2(\mathbb{R}^n)}
\end{equation}
holds for some exponent $s\in(2,\infty)$ and every Schwartz function $f\in\mathcal{S}(\mathbb{R}^n)$.
Then for every $\psi\in\mathcal{S}(\mathbb{R}^{n+1})$ and any choice of measurable functions $r_1,\ldots,r_{n+1}\colon\mathbb{R}^n\to\mathbb{R}$ the pseudo-differential operator
\[ (T_{\psi,r_1,\ldots,r_{n+1}}f)(x,t) :=  \int_{\mathbb{R}^n} \psi\big(r_1(\xi)x_1,\ldots,r_n(\xi)x_n,r_{n+1}(\xi)t\big) \,e^{\mathbbm{i} t \phi(\xi) + 2\pi\mathbbm{i} x\cdot\xi} \widehat{f}(\xi) \,\textup{d}\xi \]
satisfies the same bound 
\begin{equation}\label{eq:maxStrichartz}
\| T_{\psi,r_1,\ldots,r_{n+1}} f \|_{\textup{L}^s(\mathbb{R}^{n+1})} \lesssim_{n,\phi,\psi,s} \|f\|_{\textup{L}^2(\mathbb{R}^n)},
\end{equation}
with a constant that is independent of $r_1,\ldots,r_{n+1}$.
\end{corollary}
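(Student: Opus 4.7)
The plan is to reinterpret the Strichartz estimate \eqref{eq:Strichartz} as an ordinary Fourier restriction estimate in $\mathbb{R}^{n+1}$, then apply Theorem~\ref{thm:meainrestr}(a), and finally dualize so that the resulting linearized maximal operator becomes precisely $T_{\psi,r_1,\ldots,r_{n+1}}$. Concretely, set
\[
S := \big\{(\xi,-\phi(\xi)/(2\pi)) : \xi \in \mathbb{R}^n \big\} \subset \mathbb{R}^{n+1},
\]
and let $\sigma$ be the pushforward of Lebesgue measure on $\mathbb{R}^n$ under the graph map. With this identification one has $(e^{\mathbbm{i}t\phi(D)}f)(x) = (\mathcal{E}\widehat{f})(x,-t)$, where $\mathcal{E}u(x,t) := \int_{\mathbb{R}^n} u(\xi)\,e^{2\pi\mathbbm{i}x\cdot\xi-\mathbbm{i}t\phi(\xi)}\,\textup{d}\xi$ is the Fourier extension from $S$. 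Plancherel plus duality turn \eqref{eq:Strichartz} into
\[
\big\| \widehat{g}\big|_S \big\|_{\textup{L}^2(S,\sigma)} \lesssim_{n,\phi} \|g\|_{\textup{L}^{s'}(\mathbb{R}^{n+1})},
\]
i.e., hypothesis \eqref{eq:restr_apriori} in dimension $n+1$ with $p=s'$ and $q=2$, which satisfies $p<q$ since $s>2$.

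Next I would apply Theorem~\ref{thm:meainrestr}(a) with a Schwartz kernel $\chi\in\mathcal{S}(\mathbb{R}^{n+1})$ to be chosen in terms of $\psi$ at the end, and linearize the supremum in the standard way: for any measurable functions $r_1,\ldots,r_{n+1}\colon\mathbb{R}^n\to(0,\infty)$, the linear operator
\[
L_r g(\xi) := \big(\widehat{g}\ast\chi_{r_1(\xi),\ldots,r_{n+1}(\xi)}\big)\big(\xi,-\phi(\xi)/(2\pi)\big)
\]
maps $\textup{L}^{s'}(\mathbb{R}^{n+1})$ into $\textup{L}^2(\mathbb{R}^n)$ with a norm independent of the tuple $r=(r_1,\ldots,r_{n+1})$.

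To finish, I would expand the convolution using $\widehat{\chi_r}(x)=\widehat{\chi}(r_1x_1,\ldots,r_{n+1}x_{n+1})$ and the resulting identity $(\widehat{g}\ast\chi_r)(\zeta)=\int g(x)\,\widehat{\chi}(-r_1x_1,\ldots,-r_{n+1}x_{n+1})\,e^{-2\pi\mathbbm{i}x\cdot\zeta}\,\textup{d}x$, then compute the adjoint $L_r^*\colon\textup{L}^2(\mathbb{R}^n)\to\textup{L}^s(\mathbb{R}^{n+1})$ explicitly. Setting $h=\widehat{f}$ and performing the isometric change of variable $t\mapsto -t$ turns $L_r^*$ into exactly $T_{\psi,r_1,\ldots,r_{n+1}}$ provided that
\[
\psi(y_1,\ldots,y_n,\tau) = \overline{\widehat{\chi}(-y_1,\ldots,-y_n,\tau)}.
\]
For any given Schwartz $\psi$, this relation is solved by taking $\chi$ to be the inverse Fourier transform of $\overline{\psi(-\cdot_1,\ldots,-\cdot_n,\cdot_{n+1})}$, which lies in $\mathcal{S}(\mathbb{R}^{n+1})$, and the resulting bound on $L_r^*$ reads as \eqref{eq:maxStrichartz}.

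There is no substantial obstacle; the work is essentially bookkeeping. The points demanding care are the sign and conjugation conventions needed to align the convolution kernel $\chi$ of the maximal operator on $S$ with the symbol $\psi$ appearing inside $T_{\psi,r_1,\ldots,r_{n+1}}$, and the verification that the $r_j$ in Theorem~\ref{thm:meainrestr}(a) may indeed be taken to be arbitrary measurable functions of the base point on $S$, which is the usual linearization of a pointwise supremum of a measurable family.
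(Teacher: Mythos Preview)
Your approach is essentially the same as the paper's: reinterpret \eqref{eq:Strichartz} as a restriction estimate on the graph surface $S$ with $p=s'$, $q=2$, invoke Theorem~\ref{thm:meainrestr}(a), linearize the supremum, and dualize so that the adjoint becomes $T_{\psi,r_1,\ldots,r_{n+1}}$ for a suitable choice of $\chi$ in terms of $\psi$. The sign conventions differ (the paper takes $S=\{(\xi,\phi(\xi)/2\pi)\}$ and sets $\widecheck{\chi}=\overline{\psi}$), but this is immaterial.

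One point you do not address: Theorem~\ref{thm:meainrestr}(a) gives the supremum only over $r_j\in(0,\infty)$, and accordingly your linearization takes $r_j\colon\mathbb{R}^n\to(0,\infty)$, whereas the corollary allows arbitrary measurable $r_j\colon\mathbb{R}^n\to\mathbb{R}$. The paper handles this explicitly by first extending the maximal estimate to $r_j\in\mathbb{R}\setminus\{0\}$ via the $2^{n+1}$ coordinate reflections of $\chi$ (at the cost of a factor $2^{n+1}$ in the constant), and then passing to general real-valued $r_j$ by a limiting argument. This is routine, but since $r_j(\xi)$ may change sign with $\xi$, it cannot be absorbed into a single choice of $\chi$, so it is worth spelling out.
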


Note that \eqref{eq:Strichartz} is a particular case of \eqref{eq:maxStrichartz}, as the former inequality can be easily recovered by taking $r_1,\ldots,r_{n+1}$ to be identically $0$.
Specifically for the Schr\"{o}dinger equation, i.e., when $\phi(D)=\Delta$, the Strichartz estimate \eqref{eq:Strichartz} holds with $s=2+4/n$. A larger range of Strichartz estimates is available when one introduces the mixed norms \cite{BP61}, see \cite[Theorem~2.3]{Taobook} or a review paper \cite{CZ2010}, but our proof of Corollary~\ref{cor:Strichartz} is not well suited for this generalization.

While \eqref{eq:maxStrichartz} is questionably interesting in the theory of PDEs, we merely wanted to present a restatement of \eqref{eq:restr_multimax} in that language.
Note that in the definition of the above pseudo-differential operator it is only physically meaningful to scale the spatial variable $x$ and the time variable $t$ independently. In other words, just writing $\psi(r(\xi)(x,t))$ would make no sense.
This also partly motivates the study of multiparameter maximal Fourier restriction estimates.

%%%%%%%%%%%%%

\section{Multi-parameter Christ--Kiselev lemma}
This section is devoted to a bound on rather general multi-parameter maximal operators, which generalizes a classical result of Christ and Kiselev \cite{CK01}.

Let $(\mathbb{X},\mathcal{X},\mu)$ and $(\mathbb{Y},\mathcal{Y},\nu)$ be measure spaces. 
Let $d$ be a positive integer, which we interpret as the number of ``parameters.''
For every $1\leq j\leq d$ we are also given a countable totally ordered set $I_j$ and an increasing system $(E_j(i) : i\in I_j)$ of sets from $\mathcal{Y}$, i.e., an increasing function $E_j\colon I_j\to\mathcal{Y}$ with respect to the order on $I_j$ and the set inclusion on $\mathcal{Y}$.

\begin{lemma}[Multi-parameter Christ--Kiselev lemma]
\label{lm:multicklemma}
Take exponents $1\leq p<q\leq\infty$ and a bounded linear operator $T\colon\textup{L}^p(\mathbb{Y},\mathcal{Y},\nu)\to\textup{L}^q(\mathbb{X},\mathcal{X},\mu)$.
The maximal operator 
\[ (T_{\star}f)(x) := \sup_{(i_1,\ldots,i_d)\in I_1\times\cdots\times I_d} \big|T\big(f\mathbbm{1}_{E_1(i_1)\cap\cdots\cap E_d(i_d)}\big)(x)\big| \]
is also bounded from $\textup{L}^p(\mathbb{Y},\mathcal{Y},\nu)$ to $\textup{L}^q(\mathbb{X},\mathcal{X},\mu)$ with the operator norm satisfying
\begin{equation}\label{eq:ckest}
\|T_{\star}\|_{\textup{L}^p(\mathbb{Y})\to\textup{L}^q(\mathbb{X})} \leq \big(1-2^{1/q-1/p}\big)^{-d} \|T\|_{\textup{L}^p(\mathbb{Y})\to\textup{L}^q(\mathbb{X})}.
\end{equation}
\end{lemma}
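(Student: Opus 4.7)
My plan is to proceed by induction on the number of parameters $d$, taking the classical one-parameter Christ--Kiselev lemma \cite{CK01} as the base case $d=1$; that case produces exactly the factor $(1-2^{1/q-1/p})^{-1}$ appearing in \eqref{eq:ckest}.

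For the inductive step, I assume \eqref{eq:ckest} already holds with $d-1$ in place of $d$. I introduce the auxiliary sublinear operator
\[ Uf(x) := \sup_{(i_1,\ldots,i_{d-1}) \in I_1\times\cdots\times I_{d-1}} \bigl|T\bigl(f\1_{E_1(i_1)\cap\cdots\cap E_{d-1}(i_{d-1})}\bigr)(x)\bigr|, \]
which, by the inductive hypothesis applied to the linear $T$ together with the first $d-1$ systems, satisfies $\|U\|_{\textup{L}^p(\mathbb{Y})\to\textup{L}^q(\mathbb{X})} \leq (1-2^{1/q-1/p})^{-(d-1)} \|T\|_{\textup{L}^p(\mathbb{Y})\to\textup{L}^q(\mathbb{X})}$. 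Since $U\geq 0$, the full maximal operator can be rewritten without nested absolute values as
\[ T_\star f = \sup_{i_d \in I_d} U\bigl(f\1_{E_d(i_d)}\bigr). \]
The plan is then to apply the one-parameter Christ--Kiselev bound to the pair consisting of the sublinear $U$ and the single totally ordered family $(E_d(i_d))_{i_d\in I_d}$, picking up one more factor of $(1-2^{1/q-1/p})^{-1}$ and closing the induction with the advertised constant $(1-2^{1/q-1/p})^{-d}$.

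The one genuine step to verify is that the Christ--Kiselev argument still applies to the sublinear $U$ and not merely a linear operator. I plan to inspect the standard proof: build a binary stopping-time tree that roughly halves $\|f\|_{\textup{L}^p}^p$ at each level using the totally ordered system $(E_d(i_d))$, decompose $f\1_{E_d(i_d)}$ for every $i_d$ as a sum of disjointly supported ``tree cells'' drawn from distinct levels, bound the contribution of level~$n$ to $\textup{L}^q$ by $\|U\|_{\textup{L}^p\to\textup{L}^q}\cdot 2^{n(1/q-1/p)}$ using disjointness and the hypothesis $p<q$, and finally sum a geometric series in $n$. The only properties of $T$ used there are the pointwise triangle-type inequality $|T(\sum_k g_k)| \le \sum_k |T g_k|$ and the $\textup{L}^p\to\textup{L}^q$ boundedness, both of which remain available for $U$, so the one-parameter argument transfers verbatim. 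This verification is where I expect the bulk of the write-up to sit, but I do not anticipate a fundamental obstacle; once it is in place, the induction delivers \eqref{eq:ckest} immediately.
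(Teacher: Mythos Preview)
Your proposal is correct, and in fact the paper explicitly records exactly this argument as an alternative proof in the Remark immediately following the main proof of the lemma: generalize to sublinear operators, write $T_\star$ as an iterated one-parameter maximal truncation, and reduce to the classical $d=1$ case (observing that the Christ--Kiselev proof only uses sublinearity).

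The paper's \emph{primary} proof is different, however. Rather than black-boxing the one-parameter case, it runs a self-contained nested induction---first on $d$, then on $|I_d|$---using Tao's bisection trick directly: pick the smallest index $l$ in the $d$-th family at which $\|f\|_{\textup{L}^p(F(l))}^p$ crosses half of its final value, apply the induction on $|I_d|$ to the subfamilies $\{1,\ldots,l-1\}$ and $\{l+1,\ldots,n_d\}$ (each carrying at most half the mass), and apply the induction on $d$ to the single frozen slice $i_d=l$ (which drops to $d-1$ parameters). The three pieces are then recombined using a measurable selection set $S\subseteq\mathbb{X}$ and the $\ell^q$-versus-$\ell^1$ combination that produces the factor $2^{1/q-1/p}$. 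Your route is shorter and cleaner once one is willing to state the one-parameter lemma for sublinear operators; the paper's main route has the advantage of being entirely self-contained (no appeal to \cite{CK01} as a black box) and of making the constant transparent without first passing through the sublinear generalization.
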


The particular case $d=1$ is precisely \cite[Theorem~1.1]{CK01}.
The proof given below is a $d$-parameter modification of the approach from \cite{CK01}, incorporating a simplification due to Tao \cite[Note~\#2]{Tao06}, who used an induction on the cardinality of $I_1$ to immediately handle general measure spaces with atoms.
We include all details, since we desire to have a self-contained exposition.
%On the other hand, already the case $d=2$ seems to be a novel and nontrivial result, as we will argue later.
%We will give two proofs of Lemma~\ref{lm:multicklemma}, since they are both short and elementary.
%The first proof is self-contained, while the second one explains better the appearance of the constant on the right hand side of \eqref{eq:ckest}.

\begin{proof}
By the monotone convergence theorem it is sufficient to prove the claim when the ordered sets $I_1,\ldots,I_d$ are finite. Note that it is crucial that the desired bound does not depend anyhow on their sizes. Thus, the proof will only consider finite index sets $I_j$.
The exponents $p$ and $q$, the two measure spaces, and the operator $T$ are fixed throughout the proof.
We are using a nested mathematical induction, first on $d$ and then on the cardinality of $I_d$, to prove \eqref{eq:ckest} for all finite increasing systems of sets $(E_j(i):i\in I_j)$, $1\leq j\leq d$. 
The induction basis $d=1=|I_1|$ is trivial, since then $T_{\star}$ satisfies the same bound as $T$.

We turn to the induction step.
By renaming the indices we can achieve that $I_j=\{1,2,\ldots,n_j\}$ for each $1\leq j\leq d$ and some positive integers $n_1,\ldots,n_d$.
Denote
\[ F(i) := E_1(n_1)\cap\cdots\cap E_{d-1}(n_{d-1})\cap E_d(i) \quad\text{for } 1\leq i\leq n_d. \]
Take a function $f\in\textup{L}^p(\mathbb{Y},\mathcal{Y},\nu)$.
%If $f\mathbbm{1}_{F(n_d)}$ is $\nu$-a.e.\@ equal to $0$, then we clearly have $T_\star f=0$ $\mu$-a.e.\@, so assume that $\|f\|_{\textup{L}^p(F(n_d))}>0$.
By the assumption that the system $(E_d(i):i\in I_d)$ is increasing, we have
\[ 0 \leq \|f\|_{\textup{L}^p(F(1))} \leq \|f\|_{\textup{L}^p(F(2))} \leq\cdots\leq \|f\|_{\textup{L}^p(F(n_d))}. \]
Let $1\leq l\leq n_d$ be the smallest integer such that
\[ \|f\|_{\textup{L}^p(F(l))}^p \geq \frac{1}{2} \|f\|_{\textup{L}^p(F(n_d))}^p. \]
If $l\geq2$, then
\[ \|f\|_{\textup{L}^p(F(l-1))}^p 
< \frac{1}{2} \|f\|_{\textup{L}^p(F(n_d))}^p
\leq \frac{1}{2} \|f\|_{\textup{L}^p(\mathbb{Y})}^p, \]
so applying the induction hypothesis with the last system of sets replaced with the subsystem
\[ (E_{d}(i_d) : i_d\in\{1,\ldots,l-1\}), \]
we get
{\allowdisplaybreaks
\begin{align}
& \Big\| \max_{\substack{i_1,\ldots,i_{d}\\1\leq i_d\leq l-1}} \big|T\big(f\mathbbm{1}_{E_1(i_1)\cap\cdots\cap E_d(i_d)}\big)\big| \Big\|_{\textup{L}^q(\mathbb{X})} \nonumber \\
& = \Big\| \max_{\substack{i_1,\ldots,i_{d}\\1\leq i_d\leq l-1}} \big|T\big(f\mathbbm{1}_{F(l-1)}\mathbbm{1}_{E_1(i_1)\cap\cdots\cap E_d(i_d)}\big)\big| \Big\|_{\textup{L}^q(\mathbb{X})} \nonumber \\
& \leq \big(1-2^{1/q-1/p}\big)^{-d} \|T\|_{\textup{L}^p(\mathbb{Y})\to\textup{L}^q(\mathbb{X})} \|f\mathbbm{1}_{F(l-1)}\|_{\textup{L}^p(\mathbb{Y})} \nonumber \\
& \leq 2^{-1/p} \big(1-2^{1/q-1/p}\big)^{-d} \|T\|_{\textup{L}^p(\mathbb{Y})\to\textup{L}^q(\mathbb{X})} \|f\|_{\textup{L}^p(\mathbb{Y})}. \label{eq:inducthyp1}
\end{align}
}
Also,
\begin{align*}
\|f\|_{\textup{L}^p(F(n_d)\setminus F(l))}^p 
& = \|f\|_{\textup{L}^p(F(n_d))}^p - \|f\|_{\textup{L}^p(F(l))}^p
\leq \frac{1}{2} \|f\|_{\textup{L}^p(F(n_d))}^p
\leq \frac{1}{2} \|f\|_{\textup{L}^p(\mathbb{Y})}^p,
\end{align*}
so, if $l\leq n_d-1$, then applying the induction hypothesis with the last system of sets replaced with the subsystem,
\[ (E_{d}(i_d) : i_d\in\{l+1,\ldots,n_d\}), \]
we obtain
{\allowdisplaybreaks
\begin{align}
& \Big\| \max_{\substack{i_1,\ldots,i_{d}\\l+1\leq i_d\leq n_d}} \big|T\big(f\mathbbm{1}_{E_1(i_1)\cap\cdots\cap E_{d-1}(i_{d-1})\cap (E_d(i_d)\setminus E_d(l))}\big)\big| \Big\|_{\textup{L}^q(\mathbb{X})} \nonumber \\
& = \Big\| \max_{\substack{i_1,\ldots,i_{d}\\l+1\leq i_d\leq n_d}} \big|T\big(f\mathbbm{1}_{F(n_d)\setminus F(l)}\mathbbm{1}_{E_1(i_1)\cap\cdots\cap E_{d-1}(i_{d-1})\cap (E_d(i_d)\setminus E_d(l))}\big)\big| \Big\|_{\textup{L}^q(\mathbb{X})} \nonumber \\
& \leq \big(1-2^{1/q-1/p}\big)^{-d} \|T\|_{\textup{L}^p(\mathbb{Y})\to\textup{L}^q(\mathbb{X})} \|f\mathbbm{1}_{F(n_d)\setminus F(l)}\|_{\textup{L}^p(\mathbb{Y})} \nonumber \\
& \leq 2^{-1/p} \big(1-2^{1/q-1/p}\big)^{-d} \|T\|_{\textup{L}^p(\mathbb{Y})\to\textup{L}^q(\mathbb{X})} \|f\|_{\textup{L}^p(\mathbb{Y})}. \label{eq:inducthyp2}
\end{align}
}
Finally, if $d\geq2$, then we can also apply the induction hypothesis with the same first $d-1$ systems of sets, to conclude
{\allowdisplaybreaks
\begin{align}
& \Big\| \max_{i_1,\ldots,i_{d-1}} \big|T\big(f\mathbbm{1}_{E_1(i_1)\cap\cdots\cap E_{d-1}(i_{d-1})\cap E_d(l)}\big)\big| \Big\|_{\textup{L}^q(\mathbb{X})} \nonumber \\
& =\Big\| \max_{i_1,\ldots,i_{d-1}} \big|T\big(f\mathbbm{1}_{E_d(l)}\mathbbm{1}_{E_1(i_1)\cap\cdots\cap E_{d-1}(i_{d-1})}\big)\big| \Big\|_{\textup{L}^q(\mathbb{X})} \nonumber \\
& \leq \big(1-2^{1/q-1/p}\big)^{-d+1} \|T\|_{\textup{L}^p(\mathbb{Y})\to\textup{L}^q(\mathbb{X})} \|f\mathbbm{1}_{E_d(l)}\|_{\textup{L}^p(\mathbb{Y})} \nonumber \\
& \leq \big(1-2^{1/q-1/p}\big)^{-d+1} \|T\|_{\textup{L}^p(\mathbb{Y})\to\textup{L}^q(\mathbb{X})} \|f\|_{\textup{L}^p(\mathbb{Y})}. \label{eq:inducthyp3}
\end{align}
}
The last bound also holds in the case $d=1$, with the maximum disappearing from the left hand side, and it is a consequence of the mere boundedness of $T$.

Now denote
\begin{align*}
S := \big\{x\in\mathbb{X} :\, & (T_{\star}f)(x)=\big|\big(T\big(f\mathbbm{1}_{E_1(i_1)\cap\cdots\cap E_d(i_d)}\big)\big)(x)\big| \\ & \text{for some } (i_1,\ldots,i_d)\in I_1\times\cdots\times I_d \text{ such that } i_d\leq l-1 \big\},
\end{align*}
so, using linearity of $T$,
\begin{align*}
T_{\star}f \leq\, & \mathbbm{1}_S \max_{\substack{i_1,\ldots,i_d\\1\leq i_d\leq l-1}} \big|T\big(f\mathbbm{1}_{E_1(i_1)\cap\cdots\cap E_{d-1}(i_{d-1})\cap E_d(i_d)}\big)\big| \\
& + \mathbbm{1}_{\mathbbm{X}\setminus S} \max_{\substack{i_1,\ldots,i_d\\l+1\leq i_d\leq n_d}} \big|T\big(f\mathbbm{1}_{E_1(i_1)\cap\cdots\cap E_{d-1}(i_{d-1})\cap (E_d(i_d)\setminus E_d(l))}\big)\big| \\
& + \mathbbm{1}_{\mathbbm{X}\setminus S} \max_{i_1,\ldots,i_{d-1}} \big|T\big(f\mathbbm{1}_{E_1(i_1)\cap\cdots\cap E_{d-1}(i_{d-1})\cap E_d(l)}\big)\big|.
\end{align*}
Here, maximum over an empty set is understood to be $0$.
When $q<\infty$ we conclude
\begin{align*}
\|T_{\star}f\|_{\textup{L}^q(\mathbb{X})}
& \leq \bigg(\Big\|\max_{\substack{i_1,\ldots,i_d\\1\leq i_d\leq l-1}} \big|T\big(f\mathbbm{1}_{E_1(i_1)\cap\cdots\cap  E_d(i_d)}\big)\big|\Big\|_{\textup{L}^q(S)}^q \\
& \qquad + \Big\|\max_{\substack{i_1,\ldots,i_d\\l+1\leq i_d\leq n_d}} \big|T\big(f\mathbbm{1}_{E_1(i_1)\cap\cdots\cap E_{d-1}(i_{d-1})\cap (E_d(i_d)\setminus E_d(l))}\big)\big|\Big\|_{\textup{L}^q(\mathbb{X}\setminus S)}^q\bigg)^{1/q} \\
& \quad + \Big\|\max_{i_1,\ldots,i_{d-1}} \big|T\big(f\mathbbm{1}_{E_1(i_1)\cap\cdots\cap E_{d-1}(i_{d-1})\cap E_d(l)}\big)\big|\Big\|_{\textup{L}^q(\mathbb{X}\setminus S)},
\end{align*}
while in the endpoint case $q=\infty$ we instead have
\begin{align*}
\|T_{\star}f\|_{\textup{L}^\infty(\mathbb{X})}
& \leq \max\bigg\{\Big\|\max_{\substack{i_1,\ldots,i_d\\1\leq i_d\leq l-1}} \big|T\big(f\mathbbm{1}_{E_1(i_1)\cap\cdots\cap E_d(i_d)}\big)\big|\Big\|_{\textup{L}^\infty(S)}, \\
& \qquad\qquad \Big\|\max_{\substack{i_1,\ldots,i_d\\l+1\leq i_d\leq n_d}} \big|T\big(f\mathbbm{1}_{E_1(i_1)\cap\cdots\cap E_{d-1}(i_{d-1})\cap (E_d(i_d)\setminus E_d(l))}\big)\big|\Big\|_{\textup{L}^\infty(\mathbb{X}\setminus S)}\bigg\} \\
& \quad + \Big\|\max_{i_1,\ldots,i_{d-1}} \big|T\big(f\mathbbm{1}_{E_1(i_1)\cap\cdots\cap E_{d-1}(i_{d-1})\cap E_d(l)}\big)\big|\Big\|_{\textup{L}^\infty(\mathbb{X}\setminus S)}.
\end{align*}
Applying \eqref{eq:inducthyp1}, \eqref{eq:inducthyp2}, and \eqref{eq:inducthyp3} we complete the induction step.
\end{proof}

\begin{remark}
An alternative proof of Lemma~\ref{lm:multicklemma} can be obtained as follows. We can generalize the claim further to general sublinear operators $T$, i.e., operators satisfying
\[ |T(\alpha f)| = |\alpha| |Tf|, \quad |T(f+g)| \leq |Tf| + |Tg| \]
for all $\alpha\in\mathbb{C}$ and all $f,g\in\textup{L}^p(\mathbb{Y},\mathcal{Y},\nu)$.
The advantage of doing this is that various maximal operators are always sublinear.
Then we can write the operator $T_\star$ as a composition of $d$ maximal truncations, each one with respect to a single increasing system $(E_j(i):i\in I_j)$, namely
\[ T_{\star}f = \sup_{i_1 \in I_1} \sup_{i_2 \in I_2} \cdots \sup_{i_d \in I_d} \Big|T\Big(\cdots\big((f\mathbbm{1}_{E_1(i_1)})\mathbbm{1}_{E_2(i_2)}\big)\cdots\mathbbm{1}_{E_d(i_d)}\Big)\Big|, \]
so the claim is reduced merely to the one-parameter case.
Finally, one can notice that the known proofs of the particular case $d=1$, both the one by Christ and Kiselev \cite[Theorem~1.1]{CK01} and the one by Tao \cite[Note~\#2]{Tao06}, clearly remain valid for merely sublinear operators $T$.
We leave the details to the reader.
\end{remark}

Now assume that the second measurable space splits as a product
\[ (\mathbb{Y},\mathcal{Y}) = (\mathbb{Y}_1\times\cdots\times\mathbb{Y}_d,\, \mathcal{Y}_1\otimes\cdots\otimes\mathcal{Y}_d) \]
of $d\geq1$ measurable spaces $(\mathbb{Y}_j,\mathcal{Y}_j)$. 
Also suppose that for each $1\leq j\leq d$ we have a countable totally ordered set $I_j$ and an increasing system $(A_{j}^{i}:i\in I_j)$ of sets from $\mathcal{Y}_j$.

\begin{corollary}
\label{cor:multicklemma}
Take exponents $1\leq p<q\leq\infty$ and a bounded linear operator $T\colon\textup{L}^p(\mathbb{Y},\mathcal{Y},\nu)\to\textup{L}^q(\mathbb{X},\mathcal{X},\mu)$.
The maximal operator 
\begin{equation}\label{eq:maximal_op}
(T_{\star}f)(x) := \sup_{(i_1,\ldots,i_d)\in I_1\times\cdots\times I_d} \Big|T\Big(f\mathbbm{1}_{A^{i_1}_{1}\times\cdots\times A^{i_d}_{d}}\Big)(x)\Big|
\end{equation}
is also bounded from $\textup{L}^p(\mathbb{Y},\mathcal{Y},\nu)$ to $\textup{L}^q(\mathbb{X},\mathcal{X},\mu)$ with the operator norm satisfying
\[ \|T_{\star}\|_{\textup{L}^p(\mathbb{Y})\to\textup{L}^q(\mathbb{X})} \leq \big(1-2^{1/q-1/p}\big)^{-d} \|T\|_{\textup{L}^p(\mathbb{Y})\to\textup{L}^q(\mathbb{X})}. \]
\end{corollary}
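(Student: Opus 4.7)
The plan is to reduce Corollary~\ref{cor:multicklemma} directly to Lemma~\ref{lm:multicklemma} by rewriting the rectangular sets $A_1^{i_1}\times\cdots\times A_d^{i_d}$ as intersections of cylindrical sets, each of which varies in only one coordinate. Concretely, for every $1\leq j\leq d$ and every $i\in I_j$, I would set
\[ E_j(i) := \mathbb{Y}_1\times\cdots\times\mathbb{Y}_{j-1}\times A_j^{i}\times\mathbb{Y}_{j+1}\times\cdots\times\mathbb{Y}_d, \]
which lies in $\mathcal{Y}=\mathcal{Y}_1\otimes\cdots\otimes\mathcal{Y}_d$. Since the family $(A_j^i : i\in I_j)$ is increasing in $\mathcal{Y}_j$, the cylindrical family $(E_j(i) : i\in I_j)$ is an increasing system in $\mathcal{Y}$ for each $j$, so the hypotheses of Lemma~\ref{lm:multicklemma} are satisfied.

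A direct inspection shows that
\[ E_1(i_1)\cap\cdots\cap E_d(i_d) = A_1^{i_1}\times\cdots\times A_d^{i_d}, \]
so the maximal operator appearing in \eqref{eq:maximal_op} is precisely the maximal operator $T_\star$ from Lemma~\ref{lm:multicklemma} applied to the systems $(E_j(i):i\in I_j)$. Invoking that lemma immediately yields the bound with the same constant $(1-2^{1/q-1/p})^{-d}$. Since the reduction is purely formal, there is no genuine obstacle; the only thing to check is the preservation of the increasing property when passing from $A_j^i$ to its cylinder, which is immediate from the definition.
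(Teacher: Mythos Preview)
Your proposal is correct and matches the paper's proof exactly: the paper also defines $E_j(i) = \mathbb{Y}_1 \times\cdots\times \mathbb{Y}_{j-1}\times A^{i}_{j} \times\mathbb{Y}_{j+1}\times\cdots\times\mathbb{Y}_d$ and observes that Corollary~\ref{cor:multicklemma} is then an immediate consequence of Lemma~\ref{lm:multicklemma}.
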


\begin{proof}
This result is an immediate consequence of Lemma~\ref{lm:multicklemma}, obtained by taking
\[ E_j(i) = \mathbb{Y}_1 \times\cdots\times \mathbb{Y}_{j-1}\times A^{i}_{j} \times\mathbb{Y}_{j+1}\times\cdots\times\mathbb{Y}_d. \qedhere \]
\end{proof}

The constants blow up as $q$ approaches $p$.
An easy modification of the proof of Lemma~\ref{lm:multicklemma} gives the following endpoint result with logarithmic losses when the sets $I_j$ are finite.

\begin{corollary}
\label{cor:multirm}
Take an exponent $p\in[1,\infty]$ and a bounded linear operator $T\colon\textup{L}^p(\mathbb{Y},\mathcal{Y},\nu)\to\textup{L}^p(\mathbb{X},\mathcal{X},\mu)$.
The maximal operator given by \eqref{eq:maximal_op} satisfies
\[ \|T_{\star}\|_{\textup{L}^p(\mathbb{Y})\to\textup{L}^p(\mathbb{X})} 
\leq (\lceil\log_2 |I_1|\rceil+1)\cdots(\lceil\log_2 |I_d|\rceil+1) \,\|T\|_{\textup{L}^p(\mathbb{Y})\to\textup{L}^p(\mathbb{X})}. \]
\end{corollary}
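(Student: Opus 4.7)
The plan is to modify the proof of Lemma~\ref{lm:multicklemma} at the endpoint $p=q$, where the geometric factor $(1-2^{1/q-1/p})^{-1}$ diverges. In its place I will perform a Rademacher--Menshov-type dyadic decomposition of each increasing family $(A^{i}_{j})_{i\in I_j}$ simultaneously in all $d$ directions, picking up one logarithmic factor per parameter from the triangle inequality.

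After padding each $|I_j|$ up to $N_j=2^{K_j}$ with $K_j=\lceil\log_2|I_j|\rceil$ (by repeating the top set, which does not change $T_{\star}$), identify $I_j$ with $\{1,\ldots,N_j\}$. For every level $k_j\in\{0,\ldots,K_j\}$ and every $m_j\in\{0,\ldots,2^{K_j-k_j}-1\}$, define the ``level-$k_j$ difference sets''
\[ D_{k_j,m_j,j}:=A_j^{(m_j+1)2^{k_j}}\setminus A_j^{m_j 2^{k_j}} \qquad (A_j^{0}:=\emptyset). \]
For fixed $k_j$ these are pairwise disjoint in $\mathbb{Y}_j$, and the binary expansion of $i_j$ yields a canonical partition of the initial segment $\mathbbm{1}_{A^{i_j}_j}=\sum_{k_j=0}^{K_j}\mathbbm{1}_{D^{(i_j)}_{k_j,j}}$, in which each $D^{(i_j)}_{k_j,j}$ is either empty or equal to exactly one $D_{k_j,m_j,j}$. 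Taking the product in $j$ and using linearity of $T$ together with the triangle inequality and $\sup_{i}\sum_{k}\leq\sum_{k}\sup_{i}$, one obtains
\[ T_{\star}f\leq\sum_{k_1=0}^{K_1}\cdots\sum_{k_d=0}^{K_d}M_{k_1,\ldots,k_d}f, \qquad M_{k_1,\ldots,k_d}f:=\sup_{m_1,\ldots,m_d}\big|T\big(f\mathbbm{1}_{D_{k_1,m_1,1}\times\cdots\times D_{k_d,m_d,d}}\big)\big|. \]

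For each fixed level tuple $(k_1,\ldots,k_d)$, the product rectangles $D_{k_1,m_1,1}\times\cdots\times D_{k_d,m_d,d}$ are pairwise disjoint in $\mathbb{Y}$ as $(m_1,\ldots,m_d)$ varies. When $p<\infty$ this allows one to replace the inner supremum by an $\ell^p$-sum, yielding
\[ \|M_{k_1,\ldots,k_d}f\|_{\textup{L}^p(\mathbb{X})}^p\leq\sum_{m_1,\ldots,m_d}\|T(f\mathbbm{1}_{D_{k_1,m_1,1}\times\cdots\times D_{k_d,m_d,d}})\|_{\textup{L}^p(\mathbb{X})}^p\leq\|T\|_{\textup{L}^p\to\textup{L}^p}^p\,\|f\|_{\textup{L}^p(\mathbb{Y})}^p, \]
while the case $p=\infty$ is even simpler because the $\textup{L}^\infty$-norm commutes with finite suprema. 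Summing over the level tuples via the ordinary triangle inequality then gives the target estimate $\|T_{\star}f\|_{\textup{L}^p(\mathbb{X})}\leq\prod_{j=1}^{d}(K_j+1)\,\|T\|_{\textup{L}^p\to\textup{L}^p}\,\|f\|_{\textup{L}^p(\mathbb{Y})}$. The only delicate point is the bookkeeping of the dyadic decomposition --- that $\mathbbm{1}_{A^{i_j}_j}$ really is the asserted sum and that rectangles at a fixed level tuple are disjoint --- both of which follow immediately from the uniqueness of binary expansions; no substantial new idea beyond those already present in the proof of Lemma~\ref{lm:multicklemma} is required.
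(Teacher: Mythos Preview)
Your proof is correct. The paper does not actually write out a proof of this corollary; it only says that ``an easy modification of the proof of Lemma~\ref{lm:multicklemma}'' works. That modification amounts to bisecting each $I_d$ by cardinality rather than by $\textup{L}^p$-mass, and replacing the gain $2^{-1/p}$ (which vanishes when $p=q$) by the observation that the left and right halves act on disjointly supported pieces $f\mathbbm{1}_{\cdots\times A_d^{l}}$ and $f\mathbbm{1}_{\cdots\times(A_d^{n_d}\setminus A_d^{l})}$, so their $\textup{L}^p$-contributions recombine without loss. Iterating this gives the factor $\lceil\log_2|I_d|\rceil+1$ in the $d$-th direction, and the outer induction on $d$ produces the product.

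Your explicit dyadic (Rademacher--Menshov) decomposition is precisely the unrolled, non-recursive form of that same bisection: the level-$k_j$ difference sets $D_{k_j,m_j,j}$ are exactly the blocks created by repeatedly halving $I_j$, and your disjointness-in-$\mathbb{Y}$ step is the same mechanism that replaces the lost $2^{-1/p}$ factor. So the two arguments are essentially identical in content; yours is just packaged directly rather than inductively, which makes the constant $\prod_j(K_j+1)$ appear in one line instead of through a recursion.
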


Formulation of Corollary~\ref{cor:multirm} is motivated by Tao's \cite[Note~\#2, Q14]{Tao06}.
The particular case when $p=2$ and $T$ is the Fourier transform could be called the \emph{multi-parameter Rademacher--Menshov theorem}.
We will not need Corollary~\ref{cor:multirm} in the later text and we formulated it only for comparison with a very different method by Krause, Mirek, and Trojan \cite[Section~3]{KMT18}.
%It can be used to reprove their \cite[Theorem~1]{KMT18}.

%%%%%%%%%%%%%

\section{Proof of Theorem~\ref{thm:meainrestr}}
Denote the maximal operator
\[ \mathcal{M}f := \sup_{r_1,\ldots,r_d\in(0,\infty)}\big|\widehat{f}\ast\chi_{r_1,\ldots,r_d}\big|. \]
We begin with an observation that $\widehat{f}\ast\chi_{r_1,\ldots,r_d}$ is the Fourier transform of
\[ (x_1,\ldots,x_d)\mapsto f(x_1,\ldots,x_d)\,\widecheck{\chi}(r_1 x_1,\ldots,r_d x_d). \]
Using \eqref{eq:chi_cond} and the fundamental theorem of calculus we expand, for any $(x_1,\ldots,x_d)\in(\mathbb{R}\setminus\{0\})^d$ and $(r_1,\ldots,r_d)\in(0,\infty)^d$,
\begin{align*}
& \widecheck{\chi}(r_1 x_1,\ldots,r_d x_d) \\
& = \sum_{\epsilon\in\{-1,1\}^d} (-1)^{\#\epsilon} \,\mathbbm{1}_{Q(\epsilon)}(x_1,\ldots,x_d) \!\!\!\!\!\!\!\!\!\!\!\!\!\!\! \int_{\{(t_1,\ldots,t_d)\in Q(\epsilon):|t_j|\geq r_j|x_j|\text{ for }1\leq j\leq d\}}\limits \!\!\!\!\!\!\!\!\!\!\!\!\!\!\!\!\!\!\!\! \big(\partial_1\cdots\partial_d\widecheck{\chi}\big)(t_1,\ldots,t_d) \,\textup{d}t_1 \cdots \textup{d}t_d \\
& = \sum_{\epsilon\in\{-1,1\}^d} (-1)^{\#\epsilon} \int_{Q(\epsilon)} \mathbbm{1}_{R(\epsilon;|t_1|/r_1,\ldots,|t_d|/r_d)}(x_1,\ldots,x_d) \,\big(\partial_1\cdots\partial_d\widecheck{\chi}\big)(t_1,\ldots,t_d) \,\textup{d}t_1 \cdots \textup{d}t_d.
\end{align*}
Here $Q(\epsilon)$ is the open coordinate ``quadrant'' determined by $\epsilon=(\epsilon_1,\ldots,\epsilon_d)\in\{-1,1\}^d$, i.e.,
\[ Q(\epsilon) := \{(x_1,\ldots,x_d)\in\mathbb{R}^d \,:\, \mathop{\textup{sgn}}x_j=\epsilon_j \text{ for } 1\leq j\leq d \}, \]
$\#\epsilon$ denotes the number of $1$'s among the coordinates of $\epsilon$, and we also denote
\begin{equation}\label{eq:rectangles}
R(\epsilon;s_1,\ldots,s_d) := Q(\epsilon)\cap([-s_1,s_1]\times\cdots\times[-s_d,s_d])
\end{equation}
for any $s_1,\ldots,s_d\in(0,\infty)$.
Multiplying by $f$ and taking Fourier transforms we obtain the pointwise identity
\[ \widehat{f}\ast\chi_{r_1,\ldots,r_d} = \sum_{\epsilon\in\{-1,1\}^d} (-1)^{\#\epsilon} \int_{Q(\epsilon)} \mathcal{F}\big(f\,\mathbbm{1}_{R(\epsilon;|t_1|/r_1,\ldots,|t_d|/r_d)}\big) \,\big(\partial_1\cdots\partial_d\widecheck{\chi}\big)(t_1,\ldots,t_d) \,\textup{d}t_1 \cdots \textup{d}t_d, \]
so that
\[ \mathcal{M}f \leq \int_{\mathbb{R}^d} \Big( \sup_{r_1,\ldots,r_d\in(0,\infty)} \big|\mathcal{F}\big(f\,\mathbbm{1}_{R(\epsilon;|t_1|/r_1,\ldots,|t_d|/r_d)}\big) \big| \Big) \,\big|\big(\partial_1\cdots\partial_d\widecheck{\chi}\big)(t_1,\ldots,t_d)\big| \,\textup{d}t_1 \cdots \textup{d}t_d. \]
Note that each of the sets \eqref{eq:rectangles} is a $d$-dimensional rectangle in $\mathbb{R}^d$, so invoking Corollary~\ref{cor:multicklemma} with $T=\mathcal{F}$, which is known to satisfy \eqref{eq:restr_apriori}, gives
\[ \Big\|\sup_{r_1,\ldots,r_d\in(0,\infty)\cap\mathbb{Q}}\big|\mathcal{F}\big(f\,\mathbbm{1}_{R(\epsilon;|t_1|/r_1,\ldots,|t_d|/r_d)}\big)\big|\Big\|_{\textup{L}^q(S,\sigma)} \lesssim_{d,\sigma,p,q} \|f\|_{\textup{L}^p(\mathbb{R}^d)}. \]
The last implicit constant is independent of $t_1,\ldots,t_d$, so integrability of $\partial_1\cdots\partial_d\widecheck{\chi}$, thanks to \eqref{eq:chi_cond} again, establishes
\begin{equation}\label{eq:proof_max}
\|\mathcal{M}f\|_{\textup{L}^q(S,\sigma)} \lesssim_{d,\sigma,\chi,p,q} \|f\|_{\textup{L}^p(\mathbb{R}^d)},
\end{equation}
which is precisely \eqref{eq:restr_multimax}.

The proof of \eqref{eq:restr_convergence} is now standard. 
The claim is clear for $f\in\textup{L}^1(\mathbb{R}^d)$. By
\[ \textup{L}^s(\mathbb{R}^d) \subseteq \textup{L}^1(\mathbb{R}^d) + \textup{L}^p(\mathbb{R}^d) \]
it is sufficient to verify it when $f\in\textup{L}^p(\mathbb{R}^d)$. For any $\varepsilon>0$ denote the exceptional set
\[ E_\varepsilon := \Big\{ \xi\in\mathbb{R}^d \,:\, \inf_{r\in(0,\infty)} \sup_{r_1,\ldots,r_d\in(0,r]} \big| \big(\widehat{f}\ast\chi_{r_1,\ldots,r_d}\big)(\xi) - (\mathcal{R}f)(\xi) \big| \geq\varepsilon \Big\}, \]
observing that \eqref{eq:restr_convergence} holds for every point outside of $\cup_{\varepsilon\in(0,\infty)}E_\varepsilon$.
It is easy to see that for every $g\in\mathcal{S}(\mathbb{R}^d)$ by the mere continuity of $\widehat{g}$ we have
\[ \lim_{(0,\infty)^d\ni(r_1,\ldots,r_d)\to(0,\ldots,0)} \widehat{g}\ast\chi_{r_1,\ldots,r_d} = \mathcal{R}g \]
pointwise on $S$ and, consequently,
\[ E_\varepsilon \subseteq \Big\{\xi\in S \,:\, \mathcal{M}(f-g)(\xi)\geq\frac{\varepsilon}{2} \Big\} \cup \Big\{\xi\in S \,:\, \mathcal{R}(f-g)(\xi)\geq\frac{\varepsilon}{2} \Big\}. \]
Thus, Estimates \eqref{eq:proof_max}, \eqref{eq:restr_apriori} and the Markov--Chebyshev inequality give
\[ \sigma(E_\varepsilon) \lesssim \varepsilon^{-q} \|f-g\|_{\textup{L}^p(\mathbb{R}^d)}^q. \]
By the density of $\mathcal{S}(\mathbb{R}^d)$ in $\textup{L}^p(\mathbb{R}^d)$ we conclude $\sigma(E_\varepsilon)=0$ and nestedness of these sets also gives $\sigma(\cup_{\varepsilon\in(0,\infty)}E_\varepsilon)=0$.
Thus, \eqref{eq:restr_convergence} really holds for $\sigma$-almost every $\xi\in S$.

Turning to \eqref{eq:restr_Lebesgue}, we define
\[ \big(\widetilde{\mathcal{M}}f\big)(\xi) := \sup_{r_1,\ldots,r_d\in(0,\infty)} \frac{1}{|B_{r_1,\ldots,r_d}|} \int_{B_{r_1,\ldots,r_d}(\xi)} \big| \widehat{f}(\eta) \big| \,\textup{d}\eta \]
and repeat a trick from \cite{MRW19}. It is again sufficient to verify the claim in the endpoint case $f\in\textup{L}^{2p/(p+1)}(\mathbb{R}^d)$. Define
\[ g(x) := \int_{\mathbb{R}^d} f(y) \overline{f(y-x)} \,\textup{d}y, \]
so that $g\in\textup{L}^p(\mathbb{R}^d)$ and $\widehat{g}(\xi) = \big|\widehat{f}(\xi)\big|^2$.
Choose any non-negative $\chi\in\mathcal{S}(\mathbb{R}^d)$ with integral $1$ that is strictly positive on the closed unit ball $B_1(0,\ldots,0)$. Then, by the Cauchy--Schwartz inequality,
\begin{align*}
\frac{1}{|B_{r_1,\ldots,r_d}|} \int_{B_{r_1,\ldots,r_d}(\xi)} \big| \widehat{f}(\eta) \big| \,\textup{d}\eta
& \leq \bigg( \frac{1}{|B_{r_1,\ldots,r_d}|} \int_{B_{r_1,\ldots,r_d}(\xi)} \big| \widehat{f}(\eta) \big|^2 \,\textup{d}\eta \bigg)^{1/2} \\
& \lesssim_{\chi} \big(\widehat{g}\ast\chi_{r_1,\ldots,r_d}\big)(\xi)^{1/2},
\end{align*}
so the bound \eqref{eq:proof_max} applied to $g$ gives
\[ \big\|\widetilde{\mathcal{M}}f\big\|_{\textup{L}^{2q}(S,\sigma)}
\lesssim_{\chi} \|\mathcal{M}g\|_{\textup{L}^q(S,\sigma)}^{1/2}
\lesssim_{d,\sigma,\chi,p,q} \|g\|_{\textup{L}^p(\mathbb{R}^d)}^{1/2} 
\leq \|f\|_{\textup{L}^{2p/(p+1)}(\mathbb{R}^d)}. \]
Now we can repeat exactly the same density argument as before to conclude that \eqref{eq:restr_Lebesgue} holds for $\sigma$-almost every $\xi\in S$.
Finally, \eqref{eq:restr_ellipsoid} is an obvious consequence of \eqref{eq:restr_Lebesgue} and the triangle inequality.

%%%%%%%%%%%%%

\section{Proof of Theorem~\ref{thm:Carleson}}
The maximal operator appearing in the part (a) is simply $T_\star$ from \eqref{eq:maximal_op}, where $\mathbb{X}=\mathbb{R}^d$, $\mathbb{Y}_j=\mathbb{R}$, $T=\mathcal{F}$, $q=p'$, $I_j=(0,\infty)\cap\mathbb{Q}$, and $A_j^R=[-R,R]$.
Note that we use $p<2$ in the condition $p<p'=q$, so that Corollary~\ref{cor:multicklemma} applies and deduces the desired estimate from the well-known fact that the Fourier transform $\mathcal{F}$ is bounded from $\textup{L}^p(\mathbb{R}^d)$ to $\textup{L}^{p'}(\mathbb{R}^d)$.
The convergence result is then proved via exactly the same density argument as the one used in the previous section.

We turn to the part (b). It will be merely an adaptation of Fefferman's argument \cite{Fef71} to the continuous case. We present the complete proof here because the construction was only outlined in the aforementioned paper and it is necessary for us to construct the function in $\textup{L}^2(\R^d)$ for which the limit \eqref{eq:Fourier_conv} does not exist, instead of just disproving $\textup{L}^2(\R^d)\to \textup{L}^{2,\infty}(\R^d) $ boundedness. Namely, Stein's maximal principle \cite{Ste61} does not apply in the case of non-compact groups, such as $\R^d$.

We define $D_R(t):=\sin(2\pi Rt)/\pi t$. The operator $S_{R_1,\dots, R_d}$ is defined on $\textup{L}^2(\R^d)$ as
\[S_{R_1,\dots, R_d}f: = \mathcal{F}(\widecheck{f}\1_{[-R_1,R_1]\times\dots \times [-R_d,R_d]}) = f\ast (D_{R_1}\otimes\dots \otimes D_{R_d}).\]
Here $u_1\otimes\cdots\otimes u_d$ denotes the elementary tensor made of one-dimensional functions, defined as
\[ (u_1\otimes\cdots\otimes u_d)(x_1,\ldots,x_d) := u_1(x_1) \cdots u_d(x_d). \]
Observe that Young's convolution inequality implies 
\[ \|{S_{R_1,\dots, R_d}}\|_{\textup{L}^2(\R^d)\to \textup{L}^{\infty}(\R^d)}\lesssim (R_1\cdots R_d)^{1/2}. \]

Following Fefferman's example, we use the following definition throughout the remainder of this section. For $\lambda\in \R$ we define 
\[f_{\lambda}(x_1,x_2):=e^{2\pi \mathbbm{i} \lambda  x_1x_2}\1_{[-2,2]^2}(x_1,x_2).\]
The next lemma gives bounds that are crucial  for the proof.

\begin{lemma}\label{lm:cntexmpl_main}
\
\begin{itemize}
    \item[(a)] There exists $C>0$ such that for all $x_1,x_2\in [2/3,1]$ the following holds: \[|{S_{\lambda x_2, \lambda x_1} f_{\lambda}(x_1,x_2)}| \ge C\log \lambda\] 
    whenever $\lambda$ is large enough.
    \item[(b)] There exists $C>0$ such that for all $x_1,x_2\in [2/3,1]$ and $\lambda'\ge 3 \lambda>0$ the following holds: 
    \[|{S_{\lambda' x_2, \lambda' x_1} f_\lambda (x_1,x_2)}| \leq C.\]
\end{itemize}
%\[\sup_{R_1, R_2>0}\abs*{\mathcal{F}(g_{\lambda}\1_{[-R_1,R_1]\times [-R_2,R_2]})(x_1,x_2)}\ge C \log \lambda\]
\end{lemma}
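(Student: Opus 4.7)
The plan is to reduce $(S_{R_1, R_2} f_\lambda)(x_1, x_2)$ to a sum of four canonical oscillatory integrals and then read off the asymptotics from where the singularities of the integrands sit relative to the critical point $(0,0)$ of the phase. Starting from the convolution representation, I substitute $u_j := y_j - x_j$ and expand $\lambda y_1 y_2 = \lambda x_1 x_2 + \lambda x_2 u_1 + \lambda x_1 u_2 + \lambda u_1 u_2$, peeling off the unit-modulus factor $e^{2\pi\mathbbm{i}\lambda x_1 x_2}$. Writing $D_R(u) = (e^{2\pi\mathbbm{i} R u} - e^{-2\pi\mathbbm{i} R u})/(2\pi\mathbbm{i} u)$ and multiplying out the two kernels produces four pieces indexed by $(\epsilon_1, \epsilon_2) \in \{+,-\}^2$. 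With $a_{\epsilon_1} := \epsilon_1 R_1 + \lambda x_2$ and $b_{\epsilon_2} := \epsilon_2 R_2 + \lambda x_1$, completing the quadratic form
\[ \lambda u_1 u_2 + a_{\epsilon_1} u_1 + b_{\epsilon_2} u_2 = \lambda (u_1 + b_{\epsilon_2}/\lambda)(u_2 + a_{\epsilon_1}/\lambda) - a_{\epsilon_1} b_{\epsilon_2}/\lambda \]
and shifting $v_1 = u_1 + b_{\epsilon_2}/\lambda$, $v_2 = u_2 + a_{\epsilon_1}/\lambda$ reduces each piece, up to a further modulus-$1$ prefactor, to the canonical form
\[ I_{\epsilon_1, \epsilon_2} = \iint_{\Omega'} \frac{e^{2\pi\mathbbm{i}\lambda v_1 v_2}}{(v_1 - b_{\epsilon_2}/\lambda)(v_2 - a_{\epsilon_1}/\lambda)} \, dv_1 \, dv_2 \]
over a translate $\Omega'$ of $[-2-x_1, 2-x_1] \times [-2-x_2, 2-x_2]$, with the one-dimensional factors $1/(v_j - \cdot)$ interpreted as principal values.

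For part (a) with $R_1 = \lambda x_2$ and $R_2 = \lambda x_1$, the four shift vectors $(b_{\epsilon_2}/\lambda, a_{\epsilon_1}/\lambda)$ take the values $(0,0)$, $(2x_1,0)$, $(0,2x_2)$, $(2x_1, 2x_2)$, and only the ``aligned'' corner $(\epsilon_1,\epsilon_2)=(-,-)$ has its denominator singularity at the unique critical point of the phase. Restrict that piece to the symmetric sub-square $[-1,1]^2 \subseteq \Omega'$ and iterate the Dirichlet identity $\int_{-1}^1 \sin(\mu v)/v \, dv = 2\,\mathrm{sgn}(\mu)\,\mathrm{Si}(|\mu|)$; the substitution $s = 2\pi\lambda v_1$ then reduces it to $4\mathbbm{i} \int_0^{2\pi\lambda} \mathrm{Si}(s)/s \, ds = 2\pi\mathbbm{i} \log(2\pi\lambda) + O(1)$, using $\mathrm{Si}(s)\to \pi/2$ as $s\to\infty$. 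The L-shaped remainder $\Omega' \setminus [-1,1]^2$ contributes only $O(1)$ by direct absolute estimates. In each of the three other corners, $\max(2x_1, 2x_2) \ge 4/3$, so the denominator singularity is quantitatively separated from $(0,0)$ and the uniform bound discussed below gives $O(1)$. The aligned term, of modulus $\sim (\log \lambda)/(2\pi)$, dominates, yielding $|S_{\lambda x_2, \lambda x_1} f_\lambda(x_1,x_2)| \ge C \log \lambda$ for $\lambda$ large.

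For part (b) with $R_j = \lambda' x_{3-j}$ and $\lambda' \ge 3\lambda$, in every corner both shift coordinates satisfy $|a_{\epsilon_1}/\lambda|, |b_{\epsilon_2}/\lambda| \ge (2/3)(\lambda'/\lambda - 1) \ge 4/3$, so each $I_{\epsilon_1, \epsilon_2}$ has its denominator singularity quantitatively separated from the critical point of the phase. Both (a) and (b) then rest on the same uniform bound: for $\Omega'$ of bounded diameter and $(A, B)\in\mathbb{R}^2$ with $\max(|A|, |B|) \ge c > 0$,
\[ \bigg| \iint_{\Omega'} \frac{e^{2\pi\mathbbm{i}\lambda v_1 v_2}}{(v_1 - A)(v_2 - B)} \, dv_1 \, dv_2 \bigg| \le C, \]
with $C$ independent of $\lambda$, $A$, and $B$. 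Proving this is the main technical obstacle. Assuming without loss of generality $|A| \ge c$, the idea is to integrate in $v_1$ first: the substitution $w = v_1 - A$ turns the inner integral into
\[ e^{2\pi\mathbbm{i}\lambda A v_2} \int_{J} \frac{e^{2\pi\mathbbm{i}\lambda v_2 w}}{w} \, dw, \]
uniformly bounded via the $\mathrm{Si}$ identity and the boundedness of $\int_0^b \cos(\mu w)/w \, dw$ in $b, \mu$. The outer $v_2$-integral of this bounded function against the principal-value kernel $1/(v_2 - B)$, oscillating at the nonzero frequency $\lambda A$, is then uniformly bounded by a standard oscillatory / principal-value Hilbert-transform estimate. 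This finishes both parts.
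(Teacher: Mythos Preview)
Your four-corner decomposition via expanding $D_{R_1}\otimes D_{R_2}$ into exponentials is exactly the paper's identity $S_{R_1,R_2}=T_{R_1,R_2}-T_{-R_1,R_2}-T_{R_1,-R_2}+T_{-R_1,-R_2}$, and your completing-the-square merely repackages the paper's phase identity $x_2x_1'+x_1x_2'+(x_1-x_1')(x_2-x_2')=x_1x_2+x_1'x_2'$; undoing the shift $v\mapsto u$ lands on precisely the integrals the paper studies. The treatment of the aligned corner on $[-1,1]^2$ is the paper's Lemma~\ref{lm:cntexmpl_crucial}(a) argument verbatim.

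The gap is in your proof of the uniform bound. First, $\int_0^b\cos(\mu w)/w\,dw$ diverges at $w=0$, so that claim is literally false; you mean the principal-value integral over an interval containing the origin, which is fine. More seriously, your order of integration is backwards. With $|A|\ge c$ you integrate in $v_1$ first, obtaining $e^{2\pi\mathbbm{i}\lambda Av_2}G(v_2)$ with $G(v_2)=\textup{p.v.}\int_J e^{2\pi\mathbbm{i}\lambda v_2 w}/w\,dw$, and then invoke a ``standard oscillatory Hilbert-transform estimate'' for the outer $v_2$-integral. But boundedness of the integrand does not control a principal-value integral; one needs Lipschitz control at the outer singularity $v_2=B$, and here it fails: the prefactor has derivative of order $\lambda|A|$, while $|G'(v_2)|\lesssim 1/|v_2|$ blows up at $v_2=0$. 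For the corner $(A,B)=(2x_1,0)$ the outer singularity sits exactly at $v_2=0$, so the subtraction trick gives a bound growing with $\lambda$, and the oscillation at frequency $\lambda A$ does not help. The paper's argument (its Lemma~\ref{lm:cntexmpl_crucial}(b)) corresponds, in your coordinates, to integrating in $v_2$ first when $|A|$ is large: then the inner integral has its bad point at $v_1=0$, which is separated by at least $4/3$ from the outer singularity $v_1=A$, and on the restricted square this yields the $\lambda$-uniform derivative bound that makes the subtraction work. You also need to carry out the four-region domain decomposition (into $[-1,1]^2$ plus strips plus corner) for the non-aligned pieces as well, not only for the aligned one; the paper does this explicitly in its Observation~(2) and in part~(b).
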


Before proving the lemma, we prove that it implies the part (b) of Theorem \ref{thm:Carleson}. 
We will prove that there exist a function $f\in \textup{L}^2(\R^d)$ and a number $\delta >0$ such that 
\begin{equation}
\label{eq:Fourier_explicit_lower}
  \limsup_{R_1\to\infty,\dots, R_d\to\infty}|{S_{R_1,\dots, R_d}f(x_1,\dots, x_d)}| =\infty \quad \text{for every } (x_1,\dots, x_d)\in \left[ 2/3, 1\right]^2 \times [-\delta, \delta ]^{d-2},  
\end{equation}
so the function $\widecheck{f}\in \textup{L}^2(\R^d)$ will be the one for which \eqref{eq:Fourier_diver} holds.

Let $\psi\in \mathcal{S}(\R)$ be a real-valued Schwartz function such that $\psi(0)>0$ and $\operatorname{supp}(\widecheck{\psi})\subseteq [-1,1]$. For the function $F(x_1,\dots, x_d):= f(x_1,x_2)\prod_{j=3}^{d}\psi(x_j)$, because of the assumption on the support of $\widecheck{\psi}$, we have
\[\limsup_{R_1\to\infty,\dots, R_d\to\infty}|{S_{R_1,\dots, R_d} F(x_1,\dots, x_d)}| = \limsup_{R_1\to\infty, R_2\to\infty}\Big|{S_{R_1,R_2}f(x_1,x_2) \prod_{j=3}^{d}\psi(x_j)}\Big|.\]
Furthermore, since $\psi(0)>0$, there exists some $\delta >0$ such that $\psi(x)>0$ for all $x\in [-\delta, \delta]$, so it is enough to prove \eqref{eq:Fourier_explicit_lower} for $d=2$.

We define the sequence of positive real numbers $(a_k)_{k=1}^{\infty}$ recursively as $a_1=1$, $a_{k+1}=2^{-k/a_k}$ and the sequence of positive real numbers $(\lambda_k)_{k=1}^{\infty}$ with $\lambda_k=a_{k+1}^{-1}$. Observing that $\sum_{k=1}^{\infty}a_k< \infty$, it follows that the function
\[f(x_1,x_2):= \sum_{k=0}^{\infty}a_k f_{\lambda_k}(x_1,x_2)\]
is well defined and in $\textup{L}^2(\R^2)$

We claim that there exist real numbers $C_i>0$, $i=1,2,3$ such that the following inequalities hold for all $x_1,x_2\in [2/3,1]$ and $n\in \N$:
\begin{itemize}
    \item[(1)] $|{S_{\lambda_n x_2, \lambda_n x_1}f_{\lambda_n}(x_1,x_2)}|\ge C_1 \log\lambda_n$,
    \item[(2)] $|{S_{\lambda_n x_2, \lambda_n x_1}f_{\lambda_k}(x_1,x_2)}|\leq C_2$ when $k<n$,
    \item[(3)] $|{S_{\lambda_n x_2, \lambda_n x_1}f_{\lambda_k}(x_1,x_2)}|\leq C_3\lambda_n$ when $k>n$.
\end{itemize}
Indeed, since $\lambda_{k+1}\ge 4\lambda_k$ for all $k\in\N$, the first two inequalities follow from Lemma~\ref{lm:cntexmpl_main}, while the third one follows from Young's convolution inequality.
Therefore, observing that sequences satisfy $\lambda_{n}\sum_{k>n}a_k\lesssim 1$ for all $n\in \N$ and $a_n\log \lambda_n  \sim n$, for $x_1,x_2\in [2/3,1]$ and $n$ large enough it follows that
\begin{equation*}
    \begin{split}
        |{S_{\lambda_n x_2, \lambda_n x_1}f(x_1,x_2)}| 
        &\ge a_n|{S_{\lambda_n x_2, \lambda_n x_1}f_{\lambda_n}(x_1,x_2)}| - \sum_{k\neq n} a_k|{S_{\lambda_n x_2, \lambda_n x_1}f_{\lambda_k}(x_1,x_2)}|\\
        &\ge C_1 a_n\log \lambda_n - C_2\sum_{k<n}a_k - C_3\lambda_n\sum_{k>n}a_k \gtrsim n.
    \end{split}
\end{equation*}
Finally, noting that $\lambda_nx_2,\lambda_nx_1\to\infty$ as $n\to\infty$ finishes the proof of \eqref{eq:Fourier_explicit_lower} in the case $d=2$ and therefore also the part (b) of the theorem.

The following technical lemma will be needed in the proof of Lemma \ref{lm:cntexmpl_main}.
\begin{lemma}\label{lm:cntexmpl_crucial}
\
\begin{itemize}
    \item [(a)] There exist $C,\lambda_0>0$ such that 
    \[\bigg|{\operatorname{p.v.}\int_{-1}^{1}\int_{-1}^{1} \frac{e^{2\pi \mathbbm{i} \lambda x_1x_2}}{x_1x_2} \,\textup{d}x_1 \,\textup{d}x_2}\bigg| \ge C\log \lambda \quad \text{for every } \lambda\ge \lambda_0.\]
    \item [(b)] There exists $C>0$ such that for all $c_1,c_2\in \R$ for which $\max\{|{c_1}|,|{c_2}|\} \ge 4/3$, the following holds:
    \[\bigg|{\operatorname{p.v.}\int_{-1}^{1}\int_{-1}^{1} \frac{e^{2\pi \mathbbm{i}\lambda (x_1x_2 + c_1x_1+ c_2 x_2)}}{x_1x_2} \,\textup{d}x_1 \,\textup{d}x_2 }\bigg| \leq C \quad \text{for every } \lambda>0.\]
\end{itemize}
\end{lemma}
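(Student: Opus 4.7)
My plan hinges on the elementary identity
\begin{equation*}
\operatorname{p.v.} \int_{-1}^{1} \frac{e^{\mathbbm{i}\beta x}}{x}\,\textup{d}x = 2\mathbbm{i}\,\mathrm{Si}(\beta), \qquad \beta\in\R,
\end{equation*}
where $\mathrm{Si}(T):=\int_0^T \sin(u)/u\,\textup{d}u$ is the sine integral. This follows from the decomposition $e^{\mathbbm{i}\beta x} = \cos(\beta x) + \mathbbm{i}\sin(\beta x)$: the cosine piece is odd, so its p.v.\@ integral vanishes, while the sine piece gives $2\,\mathrm{Si}(\beta)$. I will apply this identity in the inner $x_1$-integral of both double integrals, which reduces each statement to a one-dimensional estimate. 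It is important to recall that $\mathrm{Si}$ is uniformly bounded on $\R$ and that $\mathrm{Si}(u) = \pi/2 + O(1/u)$ as $u\to\infty$, the latter coming from one integration by parts applied to $\int_u^\infty \sin(v)/v\,\textup{d}v$.

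For part (a), setting $\beta = 2\pi\lambda x_2$ and using that $\mathrm{Si}$ is odd (so $\mathrm{Si}(2\pi\lambda x_2)/x_2$ is even and locally bounded) converts the double integral into
\begin{equation*}
4\mathbbm{i}\int_0^1 \frac{\mathrm{Si}(2\pi\lambda x_2)}{x_2}\,\textup{d}x_2 \;=\; 4\mathbbm{i}\int_0^{2\pi\lambda} \frac{\mathrm{Si}(u)}{u}\,\textup{d}u
\end{equation*}
after the substitution $u=2\pi\lambda x_2$. Inserting $\mathrm{Si}(u) = \pi/2 - \int_u^\infty \sin(v)/v\,\textup{d}v$ and using the $O(1/u)$ tail bound yields $\int_0^T \mathrm{Si}(u)/u\,\textup{d}u = (\pi/2)\log T + O(1)$, hence the claimed lower bound of order $\log\lambda$ for large $\lambda$.

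For part (b), I first observe that the integrand is invariant under the simultaneous swap $(x_1,c_2)\leftrightarrow(x_2,c_1)$, so I may assume $|c_1|\geq 4/3$. Collecting the $x_1$-linear part of the phase as $2\pi\lambda(x_2+c_1)\,x_1$ and applying the base identity in $x_1$ reduces the quantity to
\begin{equation*}
\bigg| 2\mathbbm{i} \operatorname{p.v.} \int_{-1}^{1} \frac{e^{2\pi\mathbbm{i}\lambda c_2 x_2}\,g(x_2)}{x_2}\,\textup{d}x_2 \bigg|, \qquad g(x_2) := \mathrm{Si}\!\big(2\pi\lambda(x_2+c_1)\big).
\end{equation*}
Splitting $g(x_2) = g(0) + \big(g(x_2)-g(0)\big)$, the first piece integrates to $g(0)\cdot 2\mathbbm{i}\,\mathrm{Si}(2\pi\lambda c_2)$, bounded by $\|\mathrm{Si}\|_\infty^2$. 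For the second piece, since $|c_1|\geq 4/3$ forces $|x_2+c_1|\geq 1/3$ on $[-1,1]$, a direct computation gives
\begin{equation*}
\|g'\|_\infty = \sup_{x_2\in[-1,1]} \bigg|\frac{\sin(2\pi\lambda(x_2+c_1))}{x_2+c_1}\bigg| \leq 3,
\end{equation*}
so $|g(x_2)-g(0)|/|x_2|\leq 3$ uniformly on $[-1,1]$, and the remaining integral is trivially $O(1)$ with a constant independent of $\lambda, c_1, c_2$.

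The main technical nuisance is reconciling the two-dimensional principal value in the statement with the iterated computation above. This is handled by the same $\cos/\sin$ split: the $\cos$-part of the full exponential is odd in $x_1$ (for fixed $x_2$) and so its $x_1$ p.v.\@ integral vanishes identically in $\epsilon$, while the $\sin$-part produces an integrand bounded on $[-1,1]^2$, on which Fubini applies without issue. All subsequent one-variable principal values have $C^1$ numerators, so they are standard.
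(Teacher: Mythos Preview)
Your argument is correct and follows essentially the same route as the paper's: both evaluate the inner $x_1$-integral as (a multiple of) $\mathrm{Si}(\cdot)$, then in (a) extract $\log\lambda$ from the resulting one-variable integral and in (b) split $g(x_2)=g(0)+\big(g(x_2)-g(0)\big)$, using $|x_2+c_1|\ge 1/3$ to get $\|g'\|_\infty\le 3$. One cosmetic point in (a): the asymptotic $\mathrm{Si}(u)=\pi/2+O(1/u)$ is only valid for $u$ bounded away from $0$, so you should split $\int_0^{2\pi\lambda}\mathrm{Si}(u)/u\,\textup{d}u$ at (say) $u=1$ and handle $\int_0^1$ separately via $\mathrm{Si}(u)/u\to 1$; as written, the term $\int_0^T(\pi/2)/u\,\textup{d}u$ diverges and the $O(1)$ conclusion is not literally justified.
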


\begin{proof}
    (a) This was proved in \cite{MPTT04}, but we repeat the short proof for the completeness. Since $\int_{0}^{\infty}\sin t \,\textup{d}t /t= \pi/2$, there exists $\lambda_1>0$ such that 
    $\int_{0}^{x}\sin t \,\textup{d}t /t \in \left[\pi/4, 3\pi/4\right]$,  for all $x\ge \lambda_1$. Now, using symmetries of the integrand and change of variables, it follows
    \begin{equation*}
        \begin{split}
        \operatorname{p.v.}\int_{-1}^{1}\int_{-1}^{1} \frac{e^{2\pi \mathbbm{i}\lambda x_1x_2}}{x_1x_2} \,\textup{d}x_1\,\textup{d}x_2
        &= 4\mathbbm{i}\int_{0}^{1}\int_{0}^{1} \frac{\sin(2\pi \lambda x_1x_2)}{x_1x_2} \,\textup{d}x_1 \,\textup{d}x_2\\
        &= 4\mathbbm{i}\int_{0}^{2\pi}\frac{1}{x_2}\int_{0}^{\lambda x_2}\frac{\sin t}{t} \,\textup{d}t\,\textup{d}x_2 \\
        &= 4\mathbbm{i}\int_{0}^{{\lambda_1}/{\lambda}}\frac{1}{x_2} \int_{0}^{\lambda x_2}\frac{\sin t}{t} \,\textup{d}t \,\textup{d}x_2 + 4\mathbbm{i}\int_{{\lambda_1}/{\lambda}}^{2\pi} \int_{0}^{\lambda x_2}\frac{\sin t}{t} \,\textup{d}t \,\frac{\textup{d}x_2}{x_2}.
        \end{split}
    \end{equation*}
    
    For the first integral observe that $t\mapsto(\sin t)/t$ is absolutely bounded by $1$, so the integral is absolutely bounded by $\lambda_1$. For the second integral we use fact that $\lambda x_2\ge \lambda_1$ so:
    \[\int_{{\lambda_1}/{\lambda}}^{2\pi} \int_{0}^{\lambda x_2}\frac{\sin t}{t} \,\textup{d}t \,\frac{\textup{d}x_2}{x_2} \gtrsim\int_{{\lambda_1}/{\lambda}}^{2\pi} \frac{\textup{d}x_2}{x_2} = \log \lambda +\log(2\pi)  - \log \lambda_1.\]
    Finally, adding the two integrals and choosing $\lambda_0$ large enough compared to $\lambda_1$, the statement holds.
    
    (b) Assume, without loss of generality, that $c_1\ge 4/3$. Using symmetries of the integrand, it follows that
    \begin{equation*}
        \begin{split}
            &\operatorname{p.v.}\int_{-1}^{1}\int_{-1}^{1} \frac{e^{2\pi \lambda \mathbbm{i}(x_1x_2+c_1x_1+c_2x_2)}}{x_1x_2} \,\textup{d}x_1 \textup{d}x_2 \\
            &\quad \quad = \mathbbm{i}
            \operatorname{p.v.}\int_{-1}^{1}\int_{-1}^{1}  \frac{\sin(2\pi\lambda  x_1(x_2+c_1))}{x_1x_2} e^{2\pi \mathbbm{i}\lambda c_2 x_2}\,\textup{d}x_1\textup{d}x_2.
        \end{split}
    \end{equation*}
     If we define
    \[g_{\eps}(x_2):= 2\int_{\eps }^{1} \frac{\sin (2\pi \lambda x_1( x_2+c_1))}{x_1} \,\textup{d}x_1,\]
    from the assumption $c_1\ge 4/3$, it follows that $|{g_\eps'(x_2)}| \lesssim (x_2+c_1)^{-1}\lesssim 1$ for all $x_2\in [-1,1]$, where the implicit constant is independent of both $\lambda$ and $\eps$. Therefore,
    \begin{equation*}
        \begin{split}
            &\bigg|{\int_{([-1,1]\setminus[-\eps,\eps])^2} \frac{\sin(2\pi\lambda x_1(x_2+c_1))}{x_1} \frac{e^{2\pi \mathbbm{i}\lambda c_2x_2}}{x_2}\,\textup{d}x_1\textup{d}x_2}\bigg|
            = \bigg|{\int_{[-1,1]\setminus[-\eps,\eps]} g_{\eps}(x_2)\frac{e^{2\pi \mathbbm{i}\lambda c_2x_2}}{x_2}\,\textup{d}x_2}\bigg|\\ 
            &\quad \leq  \bigg|{\int_{[-1,1]\setminus[-\eps,\eps]} \frac{g_{\eps}(x_2)-g_{\eps}(0)}{x_2} e^{2\pi \mathbbm{i}\lambda c_2x_2}\,\textup{d}x_2}\bigg| + \bigg|{g_{\eps}(0)\int_{[-1,1]\setminus[-\eps,\eps]} \frac{e^{2\pi \mathbbm{i}\lambda c_2x_2}}{x_2}\,\textup{d}x_2}\bigg|\\
            &\quad\lesssim \int_{-1}^{1} \Big|{\sup_{t\in[-1,1]} g'(t)}\Big| \,\textup{d}x_2 + \sup_{N>0}\bigg|{\int_{0}^{N} \frac{\sin t}{t} dt }\bigg|^2 \lesssim 1.
        \end{split}
    \end{equation*}
    Letting $\eps\to 0$, the statement follows.
\end{proof}

We proceed to the proof of Lemma \ref{lm:cntexmpl_main}.

\begin{proof}[Proof of Lemma \ref{lm:cntexmpl_main}]

Observe that:
\begin{equation}
\label{eq:cntexmpl_decomp}
S_{R_1,R_2}f_{\lambda} = T_{R_1,R_2}f_{\lambda}-T_{-R_1,R_2}f_{\lambda}-T_{R_1,-R_2}f_{\lambda}+T_{-R_1,-R_2}f_{\lambda}, 
\end{equation}
where
\[T_{r_1,r_2}f(x_1,x_2) = -\frac{1}{4\pi^2} \operatorname{p.v.}\int_{\R^2}\frac{e^{2\pi \mathbbm{i}(r_1x_1'+r_2x_2')}}{x_1'x_2'}f(x_1-x_1',x_2-x_2') \,\textup{d}x_1'\textup{d}x_2'.\]

We prove the following two observations for the part (a) of the lemma. 

\begin{itemize}
\item[(1)] There exists $C>0$ such that for $\lambda$ large enough and $x_1,x_2\in [0,1]$:
    \[|{T_{\lambda x_2,\lambda x_1}f_{\lambda}(x_1,x_2)}|\ge C\log \lambda.\]
\item[(2)] For $\lambda>0$ and $x_1,x_2 \in [2/3,1]$, all of the expressions 
\[
|{T_{-\lambda x_2,\lambda x_1}f_{\lambda}(x_1,x_2)}|, \quad |{T_{\lambda x_2,-\lambda x_1}f_{\lambda}(x_1,x_2)}|, \quad |{T_{-\lambda x_2,-\lambda x_1}f_{\lambda}(x_1,x_2)}| 
\]
are bounded by a constant independent of $\lambda$.
\end{itemize}

In order to prove the observation (1), we note that because 
\[x_2x_1'+x_1x_2' + (x_1-x_1')(x_2-x_2')=x_1x_2+x_1'x_2',\] 
the following holds
\[|{T_{\lambda x_2,\lambda x_1}f_{\lambda}(x_1,x_2)}| = \frac{1}{4\pi^2} \bigg|{\operatorname{p.v.} \int_{[x_1-2,x_1+2]}\int_{[x_2-2,x_2+2]}
\frac{e^{2\pi \mathbbm{i} \lambda x_1'x_2'}}{x_1'x_2'}
\,\textup{d}x_2'\textup{d}x_1'}\bigg|.\]

We decompose the area of integration into 4 regions: 
\[ [-1,1]^2,\quad [-1,1]\times(\mathbb{R}\setminus[-1,1]),\quad (\mathbb{R}\setminus[-1,1])\times [-1,1],\quad (\mathbb{R}\setminus[-1,1])^2. \]
By the first part of Lemma \ref{lm:cntexmpl_crucial}, there exists $C>0$ such that the integral over the first region is at least $C\log \lambda$ whenever $\lambda$ is large enough. Integrals over the second and third regions are all $O(1)$ because of the following calculation:
\[\bigg|{\int_{1}^{x_2+2}\int_{-1}^{1} \frac{\sin(2\pi\lambda x_1'x_2')}{x_1'x_2'} \,\textup{d}x_1'\textup{d}x_2'}\bigg| = \int_{1}^{x_2+2} \frac{1}{x_2'} \bigg|{\int_{-2\pi\lambda x_2'}^{2\pi\lambda x_2'} \frac{\sin t}{t} \,\textup{d}t}\bigg| \,\textup{d}x_2' \lesssim \int_{1}^{3} \frac{1}{x_2'} \,\textup{d}x_2' \lesssim 1.\]
Finally, the integral over the last region is bounded using the triangle inequality by:
\[\int_{x_1-2}^{x_1+2}\int_{x_2-2}^{x_2+2} \frac{1}{x_1'x_2'}\1_{\{|x_1'|, |x_2'|>1\}}\,\textup{d}x_1'\textup{d}x_2' \lesssim 1.\]
Summing all the bounds we prove the observation (1).

We turn to the proof of the observation (2). First note that for $\epsilon_1,\epsilon_2\in\{-1,1\}$,
\begin{align*}
    &|{T_{\epsilon_1\lambda x_2,\epsilon_2\lambda x_1}f_{\lambda}(x_1,x_2)}|\\ 
    &= \frac{1}{4\pi^2} \bigg|{\operatorname{p.v.} \int_{[x_1-2,x_1+2]}\int_{[x_2-2,x_2+2]}
\frac{e^{2\pi \mathbbm{i}\lambda (x_1'x_2' + (\epsilon_1-1)x_1'x_2 + (\epsilon_2-1)x_2'x_1})}{x_1'x_2'}
\,\textup{d}x_2'\textup{d}x_1'}\bigg|.
\end{align*}
Assume, without loss of generality that $\epsilon_1=-1$. From the assumption on $x_2$ it follows that $|{(\epsilon_1-1)x_2}| \ge {4}/{3}$, so using the second part of Lemma \ref{lm:cntexmpl_crucial}, the integral over the first region is bounded by a constant. Integral over the fourth region is bounded as in the observation (1). Integrals over the second and the third region can be bounded using the following calculation
\[\bigg|{\int_{1}^{x_2+2}\int_{-1}^{1} \frac{\sin (2\pi \lambda x_1'(x_2'+(\epsilon_1 - 1)x_2))}{x_1'}\,\textup{d}x_1' \frac{e^{2\pi \mathbbm{i}\lambda(\epsilon_2-1)x_2'x_1}}{x_2'}\,\textup{d}x_2'}\bigg|\lesssim \int_{1}^{3}\frac{\textup{d}x_2'}{x_2'}\lesssim 1.  \]

Combining observations (1) and (2) with \eqref{eq:cntexmpl_decomp}, we conclude the proof of the part (a) of the lemma.

For the part (b), we observe that for $\epsilon_1,\epsilon_2\in\{-1,1\}$ the following holds:
\begin{align*}
& |{T_{\epsilon_1\lambda'x_2,\epsilon_2\lambda'x_1}f_{\lambda}(x_1,x_2)}| \\
& = \frac{1}{4\pi^2}\bigg|{\operatorname{p.v.}\int_{[x_1-2,x_1+2]}\int_{[x_2-2,x_2+2]} \frac{e^{2\pi \mathbbm{i}\lambda ( x_1'x_2' + ({\epsilon_1\lambda'}/{\lambda}-1)x_1'x_2+ ({\epsilon_2\lambda'}/{\lambda}-1)x_2'x_1)}}{x_1'x_2'} \,\textup{d} x_2'\textup{d}x_1'}\bigg|.
\end{align*}
We then decompose the area of integration in the same four parts as before. For the first part, since $|{(\epsilon_1\lambda'/\lambda-1)x_2}|\ge 4/3$, we use the second part of Lemma \ref{lm:cntexmpl_crucial} to get the upper bound and we treat the other parts as in the part (a) of the lemma. 
\end{proof}

\begin{remark}
It is obvious that the function $f$ in the proof of the part (b) is in $\textup{L}^1(\R^d)$, so the function $\widecheck{f}$, for which the convergence \eqref{eq:Fourier_conv} fails, is also continuous and therefore the counterexample exists in the class $C(\R^d)\cap \textup{L}^2(\R^d)$.
\end{remark}

%%%%%%%%%%%%%

\section{Proof of Corollary~\ref{cor:Strichartz}}
Let
\[ S := \Big\{\Big(\xi, \frac{\phi(\xi)}{2\pi}\Big) : \xi\in\mathbb{R}^n\Big\} \subseteq \mathbb{R}^{n+1} \]
be the hypersurface naturally associated with \eqref{eq:dispPDE}. Equip $S$ with the projection measure $\textup{d}\sigma(\xi,\tau)=\textup{d}\xi$. 
For every $g\in\textup{L}^2(S,\sigma)$ there exist a unique $f\in\textup{L}^2(\mathbb{R}^n)$ such that
\begin{equation}\label{eq:fgsubstit}
g\Big(\xi, \frac{\phi(\xi)}{2\pi}\Big) = \widehat{f}(\xi)
\end{equation}
for a.e.\@ $\xi\in\mathbb{R}^n$.
By the assumption \eqref{eq:Strichartz} and the Plancherel identity we then know that $\mathcal{E}$ given by the formula
\[ (\mathcal{E}g)(x,t) := (e^{\mathbbm{i} t \phi(D)} f)(x) \]
extends to a bounded linear operator $\mathcal{E}\colon\textup{L}^2(S,\sigma)\to\textup{L}^s(\mathbb{R}^{n+1})$.
In the case when $f\in\mathcal{S}(\mathbb{R}^{n})$, we can write
\[ (\mathcal{E}g)(x,t) 
= \int_{\mathbb{R}^n} e^{\mathbbm{i}t \phi(\xi) + 2\pi\mathbbm{i}x\cdot \xi} g\Big(\xi, \frac{\phi(\xi)}{2\pi}\Big) \,\textup{d}\xi
= \int_S e^{2\pi\mathbbm{i}(x,t)\cdot(\xi,\tau)} g(\xi,\tau) \,\textup{d}\sigma(\xi,\tau) \]
and, taking another Schwartz function $h\in\mathcal{S}(\mathbb{R}^{n+1})$,
\[ \int_{\mathbb{R}^{n+1}} h(x,t) \,\overline{(\mathcal{E}g)(x,t)} \,\textup{d}x \,\textup{d}t = \int_{S} \widehat{h}(\xi,\tau) \,\overline{g(\xi,\tau)} \,\textup{d}\sigma(\xi,\tau). \]
By duality we now see that the a priori restriction estimate \eqref{eq:restr_apriori} holds with $d=n+1$, $p=s'$, $q=2$.
In fact, the Fourier restriction operator $\mathcal{R}\colon\textup{L}^{s'}(\mathbb{R}^{n+1})\to\textup{L}^2(S,\sigma)$
is now known to be bounded and its adjoint is precisely $\mathcal{E}$, which is for this reason sometimes called the \emph{Fourier extension operator}.

Note that $p=s'<2=q$. Now Theorem~\ref{thm:meainrestr} applies, so that the maximal estimate \eqref{eq:restr_multimax} gives
\begin{equation}\label{eq:restr_multimax2}
\Big\|\sup_{r_1,\ldots,r_{n+1}\in(0,\infty)}\big|\widehat{h}\ast\chi_{r_1,\ldots,r_{n+1}}\big|\Big\|_{\textup{L}^2(S,\sigma)} \lesssim_{n,\phi,\chi,s} \|h\|_{\textup{L}^{s'}(\mathbb{R}^{n+1})}
\end{equation}
for any given Schwartz function $\chi\in\mathcal{S}(\mathbb{R}^{n+1})$.
If we extend the definition of dilates as
\[ \chi_{r_1,\ldots,r_d}(x_1,\ldots,x_d) := \frac{1}{|r_1\cdots r_d|} \chi\Big(\frac{x_1}{r_1},\ldots,\frac{x_d}{r_d}\Big) \]
for $r_1,\ldots,r_d\in\mathbb{R}\setminus\{0\}$, then \eqref{eq:restr_multimax2} implies
\begin{equation}\label{eq:restr_multimax3}
\Big\|\sup_{r_1,\ldots,r_{n+1}\in\mathbb{R}\setminus\{0\}}\big|\widehat{h}\ast\chi_{r_1,\ldots,r_{n+1}}\big|\Big\|_{\textup{L}^2(S,\sigma)} \lesssim_{n,\phi,\chi,s} \|h\|_{\textup{L}^{s'}(\mathbb{R}^{n+1})},
\end{equation}
by considering $2^{n+1}$ quadrants of $\mathbb{R}^{n+1}$, flipping $\chi$ as necessary, and increasing the implicit constant by the factor $2^{n+1}$.
Linearizing and dualizing \eqref{eq:restr_multimax3} we obtain
\[ \bigg| \int_S \big(\,\widehat{h}\ast\chi_{r_1(\xi),\ldots,r_{n+1}(\xi)}\big)(\xi,\tau) \,\overline{g(\xi,\tau)} \,\textup{d}\sigma(\xi,\tau) \bigg| \lesssim_{n,\phi,\chi,s} \|g\|_{\textup{L}^2(S,\sigma)} \|h\|_{\textup{L}^{s'}(\mathbb{R}^{n+1})} \]
for any choice of measurable functions $r_1,\ldots,r_{n+1}\colon\mathbb{R}^n\to\mathbb{R}\setminus\{0\}$.
If we further substitute \eqref{eq:fgsubstit} and choose $\chi$ such that $\widecheck{\chi}=\overline{\psi}$, then we can rewrite the last bilinear estimate as
\[ \bigg| \int_{\mathbb{R}^{n+1}} h(x,t) \,\overline{(T_{\psi,r_1,\ldots,r_{n+1}}f)(x,t)} \,\textup{d}x \,\textup{d}t \bigg| \lesssim_{n,\phi,\psi,s} \|f\|_{\textup{L}^2(\mathbb{R}^n)} \|h\|_{\textup{L}^{s'}(\mathbb{R}^{n+1})}, \]
which is just the dualized formulation of the desired bound \eqref{eq:maxStrichartz}.
The case of general measurable functions $r_1,\ldots,r_{n+1}\colon\mathbb{R}^n\to\mathbb{R}$ now easily follows in the limit.

%%%%%%%%%%%%%%%%%%%%%%%%%%%%%%%%%%%%%%%%%%%%%%%%

\section*{Acknowledgments}
This work was supported in part by the \emph{Croatian Science Foundation} project UIP-2017-05-4129 (MUNHANAP).

%%%%%%%%%%%%%%%%%%%%%%%%%%%%%%%%%%%%%%%%%%%%%%%%

\bibliography{maximal_restriction}{}
\bibliographystyle{plain}

\end{document}